\newcommand{\cmark}{\ding{51}}
\newcommand{\xmark}{\ding{55}}
\newtheorem{proposition}{Proposition}
\begin{document}

\title{Dynamic Carbon Intensity Indicator (CII) Management in Stochastic Tramp Shipping Market}
\author[a]{Hanyu Cheng}
\author[a]{Liangqi Cheng}
\author[a]{Xiwen Bai\thanks{Corresponding author: xiwenbai@mail.tsinghua.edu.cn}}
\affil[a]{Department of Industrial Engineering, Tsinghua University, 30 Shuangqing Road, Beijing 100084, China}
\date{}
\maketitle

\begin{abstract}
In the maritime sector, tramp shipping companies manage fleets to maximize profit while navigating market uncertainties. The International Maritime Organization (IMO) recently introduced the Carbon Intensity Indicator (CII) to reduce greenhouse gas emissions, further complicating deployment decisions. This paper introduces a novel two-stage stochastic programming model for long-term fleet deployment under market uncertainty and CII regulation. It is the first to integrate key operational uncertainties\textemdash fuel prices, freight rates, and cargo demand\textemdash into a unified tactical planning framework under CII regulation, simultaneously optimizing routing, cargo allocation, and speed. Furthermore, we develop an novel efficient heuristic algorithm that reliably converges to solutions within a 5\% optimality gap, enabling practical decision-support under uncertainty. Numerical analysis highlights two key findings based on our model: (1) It uncovers the ``CII paradox,'' a critical counterintuitive phenomenon where the present Supply-based CII regulation may increase total emissions significantly and drastically reduce profits. This challenges the conventional wisdom that stricter carbon-intensity rules invariably reduce emissions. (2) It demonstrates the advantage of stochastic modeling, showing that accounting for future uncertainties significantly narrows the revenue gap with perfect-foresight solutions, thereby offering superior economic performance over deterministic approaches. Collectively, these results deepen the understanding of environmental regulation's operational impacts and pave the way for more effective and sustainable fleet management strategies.

\bf Keywords: Carbon intensity indicator (CII); Tramp shipping; Fleet deployment; Speed optimization; Stochastic optimization
\end{abstract}

\newpage

\section{Introduction}
Maritime transportation serves as the cornerstone of international trade, carrying over 80\% of global goods by volume \citep{unctad2021review}. The shipping industry can be broadly categorized into liner shipping and tramp shipping services.
In the tramp shipping market, companies dynamically manage their fleets to transport cargo, aiming to minimize costs while fulfilling contractual obligations. They also seek additional revenue by leveraging the spot market \citep{wang2021carrier}, where strategic optimization of fleet deployment decisions can significantly enhance profits. These fleet deployment decisions typically involve determining how to assign ships to different routes which are composed of several ports in a certain sequence and at what times, deciding which cargo each ship will carry and the volume of the cargo (comprising both contractual and spot market cargo), and the speeds at which they will sail in each trip from origin to destination. These decisions are hereby referred to as route, cargo, and speed selections. However, such fleet deployment decisions are complicated by uncertainties in market conditions, including fluctuating contractual demands and spot market volatility. For instance, some contracts depend on customer production levels rather than fixed quantity, while spot market demand varies with commodity-specific conditions. These uncertainties necessitate meticulous planning of fleet routing and deployment strategies, a topic that has drawn substantial research attention, including contributions by \cite{gelareh2010novel} and \cite{andersson2015integrated}, among others.

In addition to market-driven challenges, the shipping industry has faced an increasingly stringent and multi-layered regulatory framework aimed at curbing maritime emissions in recent years. Internationally, the International Maritime Organization (IMO) has implemented a series of mandatory measures, including the global 0.5\% sulphur cap \citep{imo2020sulphurcap}, as well as the Energy Efficiency Existing Ship Index (EEXI) and the CII for the majority of ships \citep{mepc2021336}, with 24,653 ships reporting their CII ratings in 2023 \citep{imo2023mepc82}. Regionally, the European Union has incorporated maritime transport into its Emissions Trading System (EU ETS) to price carbon \citep{eu_directive_2023_959}. Among these regulatory schemes, the CII has been a hot topic. Recent research has evaluated its regulatory effectiveness \citep{wang2021paradox}, its assessments on different ships \citep{bayraktar2023scenario}, and its implications for operational decisions in shipping \citep{cheng2025cargo}. It differs fundamentally from other mechanisms such as the EU ETS and the EEXI. While the EU ETS emphasizes a market-based cap-and-trade approach to regulate total emissions \citep{ellerman2010pricing}, and the EEXI defines a technical baseline targeting the design efficiency of existing ships \citep{bayraktar2023scenario}, the CII instead directly links emissions to actual transport performance, serving as a standardized indicator of carbon intensity through ships' real operational data. Therefore, unlike studies on the EU ETS that primarily focus on analyzing market trading dynamics \citep{hintermann2016price,abrell2011assessing,zhang2010overview}, and those on the EEXI that examine its potential influence on technological innovation and ship retrofitting \citep{lee2024analysis,bayramouglu2024effects,psillas2022effective}, our work focus on integrating the CII into decision-making from the perspective of shipping companies.

Although the CII encourages operators to enhance energy efficiency through operational measures and adopting alternative fuels or advanced technologies, the substantial investment required for these technologies often renders them inaccessible. As a result, operational optimization becomes a more immediate and practical lever for most shipping companies to comply with CII regulations in the near term \citep{yuan2023operational}. However, this adds new constraints to a shipping company's decision-making process and may lead to decisions that differ from those focused solely on economic profitability, as it balances environmental goals with the need for operational efficiency. For instance, a ship might choose to reduce its speed to decrease carbon emissions, in order to meet the annual CII requirement. Consequently, this would result in a longer transit time for delivering a cargo compared to before, thereby leading to a decrease in the amount of cargo that can be transported within a specific time frame \citep{hua2024speed}. Furthermore, the CII's annual evaluation makes it challenging to optimize operations amid uncertain market conditions. Shipping companies must not only make decisions based on current and predicted market conditions but also consider long-term environmental perspectives to meet the annual CII targets. This requires them to adjust their strategies to ensure compliance with CII, regardless of future market fluctuations, introducing a higher level of complexity into the decision-making process. With the incorporation of CII, short-term decisions now directly influence long-term decision-making process, as the two are interconnected through the CII constraints. When short-term decisions are made, in collaboration with uncertain future market conditions, they directly limit the feasible decision space for long-term decisions in the future to fulfill CII requirements while maintaining earnings as much as possible. As a result, optimizing the deployment decisions becomes significantly more challenging.

Consequently, tramp shipping operators must navigate a central problem: optimizing fleet deployment amid market volatility while simultaneously complying with the stringent CII regulation. While stochastic programming is well-established for handling market uncertainties in fleet deployment \citep{pantuso2016uncertainty, arslan2017bulk}, and recent studies have begun integrating environmental policies into operational models \citep{de2023sailing, wang2021paradox}, a critical gap remains: Existing stochastic models overlook CII's binding constraints \citep{wu2021robust, pantuso2016uncertainty}, while CII-focused models are largely deterministic and cannot address long-term market risks \citep{cheng2025cargo}. This disconnect is critical because the annual CII mechanism intrinsically couples decisions under uncertainty with the long-term environmental target, a dynamic previously unmodeled.

To bridge this gap, we propose a two-stage stochastic programming model that simultaneously optimizes fleet deployment under market uncertainty and CII compliance. The model focuses on route selection, cargo selection, and sailing speed optimization, accounting for variations in fuel prices, market demand, and freight rates. Market scenarios are generated based on economic trends to reflect actual fluctuations. This model addresses the challenges mentioned above in two ways: First, it incorporates the annual CII as a constraint, rather than focusing solely on maximizing economic profit, which forces decisions that minimize economic losses while being forced to comply with the CII requirements. Second, the model considers all possible future market scenarios as constraints and parts of objective function and explores the feasible decision space for both the first (short-term) and second (long-term) stages. This ensures that decisions made in the first stage are sufficiently conservative, allowing second-stage have enough space to make decisions to meet the CII requirements while also maximizing economic profit.

We conduct numerical experiments with two forms of CII (Demand-based CII, based on actual cargo carried, and Supply-based CII, which uses ship capacity as a proxy), examining decisions made by ships and comparing the overall profits and carbon emissions of the shipping company under both CII forms, which highlights potential shortcomings in the current CII. Additionally, numerical analysis is performed under different levels of information availability, including cases where the company knows the actual future information, the distribution of future market conditions, and only the expected value of future market conditions, demonstrating the critical role of accessible information in decision-making.

The contribution of our paper can be summarized as follows:

\begin{itemize}
    \label{sec:contribution}
    \item \textbf{Dynamic CII management under long-term uncertainty}: We propose a two-stage stochastic model integrating annual CII constraints into fleet deployment at a tactical level, capturing long-term uncertainties, unlike deterministic, voyage-level approaches in previous literature.
    \item \textbf{Demonstrating the CII paradox in tramp shipping through fleet-level optimization}: Our model reveals how Supply-based CII can unintentionally increase ballast sailing and emissions due to economic-environmental conflicts by modeling realistic fleet operations.
    \item \textbf{Quantifying the value of future information}: We evaluate how market foresight levels impact profitability, showing stochastic models significantly outperform deterministic ones.
\end{itemize}

The structure of the subsequent sections is organized as follows. We review the relevant studies in Section \ref{sec:literature}, and describe the problem in detail in Section \ref{sec:problem_description}. Section \ref{sec:model_formulation} presents the mathematical formulation of the proposed problem. In Section \ref{sec:algorithm}, we develop a heuristic solution method to reduce the computation complexity. The numerical analysis is conducted in Section \ref{sec:case_study}. Finally, conclusions including the limitations of this work and possible avenues for future research are discussed in Section \ref{sec:conclusion}.

\section{Literature review}
\label{sec:literature}

This section reviews related studies on maritime fleet deployment under uncertainties and the incorporation of environmental policies, including the Carbon Intensity Index (CII), in maritime transportation decision-making.

Previous research has extensively examined tramp shipping operations under deterministic market conditions, with a primary focus on short-term fleet deployment and routing decisions. Some studies address routing and bunkering optimization to improve profitability \citep{meng2015tailored,he2024route}, while others incorporate speed optimization into the planning framework \citep{he2024route,tamburini2025rich}. Routing and scheduling problems that include optional cargo selection and cargo splitting have also been studied under deterministic settings \citep{korsvik2011large,cheng2025cargo}. More recently, researchers have explored operational planning under special conditions, such as tidal berth time windows \citep{gao2023tramp} and industrial shipping scenarios with a single export terminal and multiple import terminals \citep{song2025bulk}. These studies provide valuable insights into tramp ship operations under deterministic and operational contexts. However, their focus on relatively short-term and deterministic conditions limits their applicability to long-term decision-making under uncertainty.

When it comes to relatively long-term decision making process in the tramp shipping market, decisions are usually made under economic uncertainty influenced by various factors. Inflation, interest rates, and exchange rates all influence the supply and demand for shipping services \citep{abakah2024asymmetric}, and consequently affects freight rate fluctuations. Besides, variation in fuel prices, a major cost driver, also significantly affect transportation expenses \citep{jia2024resilience,vcech2022marine,xu2011dynamics}. On this basis, a significant body of research has integrated uncertainty into shipping companies' decision-making processes, addressing the long-term ship deployment problem. The two primary approaches for modeling this issue are the robust optimization model and the stochastic optimization model \citep{christiansen2025fifty}. Robust optimization models enhance the resilience of fleet deployment decisions by ensuring feasibility under worst-case realizations of uncertain parameters. In this stream, they typically adopt mixed-integer linear programming (MILP) robust optimization models to handle fleet sizing \citep{wu2021robust} and deployment problems \citep{alvarez2011robust}, where uncertainties in shipping prices, charter rates, and demolition values are captured through bounded uncertainty sets. In contrast, stochastic optimization models explicitly account for probabilistic scenarios of future market conditions. These models optimize fleet renewal (such as sales and chartering) and deployment decisions (such as service frequency and lay-up), under uncertainties in demand, fuel costs, charter costs, and ship values through multi-stage stochastic programming frameworks \citep{pantuso2016uncertainty, arslan2017bulk}. Although some studies were originally developed for liner shipping, their formulations can be readily generalized to tramp operations \citep{pantuso2016uncertainty}, where voyage patterns and market exposure exhibit similar stochastic characteristics. The studies mentioned in this paragraph consider the optimization of decisions regarding fleet deployment and speed in the tramp shipping market, both under deterministic case and stochastic case. However, they do not incorporate environmental policies into their model, thereby failing to capture the regulatory constraints that increasingly shape deployment decisions.

In current research, considerations of environmental policies can be categorized into two types: one emphasizes operational optimization aimed at improving environmental performance, and the other focuses on the evaluation of the effectiveness of environmental policies at a high level. The first category of studies focuses on operational optimization, proposing specific technologies or strategies to address ever-evolving environmental policies. Such studies have incorporated the constraints of the  environmental policies into fleet decision-making processes. From the perspective of shipping operators, previous studies design optimization frameworks for shipping operations, including bunkering to comply with the IMO 2020 sulfur fuel regulations \citep{de2023sailing}, coordinating vessel arrivals with port service capacity to reduce fuel consumption and emissions \citep{sung2022multi}, adjusting engine power settings to achieve fuel and emission savings \citep{wang2021voyage}, and optimizing operational pickup-and-delivery decisions with embedded CII constraints \citep{cheng2025cargo}. The second category of studies focuses on evaluating and optimizing the effectiveness of environmental policies, examining how regulatory measures influence shipping operations and investment decisions. Previous studies investigate policy design and implementation to reduce carbon emissions, including optimizing sulfur emission control areas \citep{zhuge2024shipping}, assessing the short-term effects of bunker levies on tanker market emissions \citep{lagouvardou2022impacts}, analyzing the influence of shipping alliances on green investments and the effectiveness of emission taxes \citep{shang2024would}, and highlighting the potential paradox of the CII where compliance may paradoxically increase emissions \citep{wang2021paradox}. These two research directions contribute to a deeper understanding of how the shipping industry can adapt to environmental policies, balancing efficiency with sustainability goals, and revealing potential improvements for environmental policy making. However, few studies have captured the interplay between long-term market uncertainty and dynamic environmental regulations in deployment models.

\begin{table}[!htbp]
\small
\centering
\renewcommand{\arraystretch}{1.5}
\caption{Research comparison in literature.}
\label{tab:literature_compare}
\resizebox{\textwidth}{!}{%
\begin{tabularx}{\textwidth}{p{2.5cm}cccccc}
\toprule
\textbf{Paper} & \multicolumn{3}{c}{\textbf{Decision}} & \multirow{2}{*}{\textbf{Planning Level}} & \multirow{2}{*}{\textbf{Uncertainty}} & \multirow{2}{*}{\textbf{Environmental Policy}} \\
\cmidrule(lr){2-4}
& Route & Speed & Cargo & & \\
\midrule
\cite{meng2015tailored} & \cmark & \xmark & \cmark & Operational & \xmark & \xmark \\
\cite{he2024route} & \cmark & \cmark & \xmark & Operational & \xmark & \xmark \\
\cite{tamburini2025rich} & \cmark & \cmark & \cmark & Tactical & \xmark & \xmark \\
\cite{korsvik2011large} & \cmark & \xmark & \cmark & Operational & \xmark & \xmark \\
\cite{gao2023tramp} & \cmark & \cmark & \cmark & Operational & \xmark & \xmark \\
\cite{song2025bulk} & \cmark & \cmark & \xmark & Tactical & \xmark & \xmark \\
\cite{wu2021robust} & \cmark & \xmark & \cmark & Operational & \cmark & \xmark \\
\cite{alvarez2011robust} & \cmark & \xmark & \cmark & Strategic & \cmark & \xmark \\
\cite{pantuso2016uncertainty} & \cmark & \xmark & \cmark & Strategic & \cmark & \xmark \\
\cite{arslan2017bulk} & \cmark & \xmark & \cmark & Tactical & \cmark & \xmark \\
\cite{de2023sailing} & \cmark & \cmark & \cmark & Operational & \cmark & IMO 2020 \\
\cite{sung2022multi} & \xmark & \cmark & \xmark & Operational & \xmark & GHG Reduction \\
\cite{wang2021voyage} & \cmark & \cmark & \xmark & Operational & \xmark & GHG Reduction \\
\cite{zhuge2024shipping} & \cmark & \cmark & \xmark & Operational & \xmark & IMO ECA \\
\cite{lagouvardou2022impacts} & \cmark & \cmark & \xmark & Operational & \xmark & MBMs \\
\cite{shang2024would} & \xmark & \xmark & \xmark & Strategic & \xmark & MBMs \\
\cite{wang2021paradox} & \xmark & \xmark & \xmark & Operational & \xmark & IMO CII \\
\cite{cheng2025cargo} & \cmark & \cmark & \cmark & Operational & \xmark & IMO CII \\
\textbf{This paper} & \cmark & \cmark & \cmark & Tactical & \cmark & IMO CII \\
\bottomrule
\end{tabularx}
}
\vspace{2mm}
\caption*{\raggedright \setstretch{1} \footnotesize
Note: For \textit{Planning Level}, ``Operational'' refers to short-term planning (weeks to months), such as port-to-port routing and voyage scheduling; ``Tactical'' refers to medium-term planning (months to a year), such as trade route design, regional service optimization, chartering, and other deployment decisions; ``Strategic'' refers to long-term planning (over one year), such as new ship construction, fleet renewal, or ship demolition and other firm-level investment decisions. For \textit{Environmental Policy}, ``IMO CII'' refers to the Carbon Intensity Indicator; ``IMO 2020'' refers to the global sulphur cap regulation; ``GHG Reduction'' refers to the goal of reducing greenhouse gas (GHG) emissions; ``IMO ECA'' refers to Emission Control Areas under MARPOL Annex VI; ``MBMs'' refers to Market-Based Measures, such as bunker levy and carbon pricing.}
\end{table}

We summarize the literature mentioned above in Table~\ref{tab:literature_compare}. The above literature has focused on the decision making in tramp shipping market, considering uncertain market conditions and environmental policies, and some have also made suggestions for such environmental policies. However, previous studies have not incorporate long-term uncertain tramp shipping market conditions and the constraints of CII policy into a single model, nor have there been extensive numerical experiments conducted to reveal the potential paradox in existing policies through such a model. Research such as \cite{cheng2025cargo} incorporates CII constraints only within a short-term deterministic framework, focusing primarily on operational design. While such models are essential for voyage-level efficiency, optimizing a single voyage in isolation may lead to locally optimal decisions that jeopardize annual CII compliance and fail to capture interactions among decisions across different stages of the planning horizon. Moreover, modeling long-term planning at such a detailed operational level would require high-resolution data—such as specific time windows for each cargo in every voyage throughout the year—which is often unavailable and impractical for shipping companies to obtain or manage. Therefore, a tactical-level model is necessary to bridge the gap between strategic environmental targets and operational execution. Tactical planning emphasizes focusing on regional routes connecting broader areas rather than individual ports, and on service frequency and capacity allocation over the long term, rather than on short-term, single-voyage operations. This could provide high-level deployment decisions that serve as the foundation and input for the operational optimization in studies such as \cite{cheng2025cargo}. Our research develops an enhanced tactical tramp fleet deployment model that integrates the annual CII regulation alongside long-term uncertainties including demand, fuel prices, and freight rates at a higher fleet deployment level. This model simultaneously addresses both environmental and economic considerations, revealing the paradox inherent in the CII policy and demonstrating the value of future information for shipping companies. Besides, based on the proposed model, we develop an algorithm that combines parameter search and solver techniques to improve solution efficiency, achieving an acceptable result in significantly less time compared to using the solver alone.

\section{Problem description}
\label{sec:problem_description}

\subsection{Tramp shipping market operation}
\label{sec:tramp_market}

In the tramp shipping market, the two primary stakeholders are the shipowner (carrier) and the shipper. Shipowners own and operate vessels, making decisions regarding cargo selection, routing, and speed to optimize revenue \citep{wu2021robust}. Shippers, on the other hand, are entities that require cargo transportation, negotiating with tramp shipping companies (shipowners) \citep{meng2015tailored}.

Ships in the market load goods at specific ports and unload them at other designated ports, based on the transportation demands \citep{staalhane2015branch, branchini2015routing}. These demands can typically be categorized into two types: contractual demand and spot market demand. Contractual demand refers to transportation needs defined by contracts between shippers and shipowners, outlining the total cargo volume to be transported over a specified period, the transportation frequency, the origin and destination ports, and the freight rates \citep{norstad2011tramp, christiansen2007maritime, bronmo2007multi}. Spot market demand, on the other hand, involves ad-hoc transportation needs announced by cargo owners that are not covered by long-term contracts. These are optional cargoes that shipping companies can choose to carry, depending on profitability and available fleet capacity \citep{norstad2011tramp}. For certain commodities, such as oil, the loading and unloading ports tend to be relatively stable. In such cases, transportation links connecting two geographical regions—comprising multiple loading ports in the origin area and multiple unloading ports in the destination area—are known as trade lanes \citep{gu2019can}.

\subsection{Tactical planning in tramp shipping deployment problems}
\label{sec:plan}

Shipowners need to allocate their vessels to different trade lanes to meet the cargo obligations specified in the each contract, and to transport spot cargoes based on the spot market demand and available capacity. Incorporating such decision-making process, this study focuses on long-term deployment problems, rather than short-term routing problems, considering future uncertain market conditions. Following the distinctions between tactical and operational planning in tramp shipping \citep{pache2020tactical}, this study adopts a tactical-level approach to capacity deployment. It focuses on regional-scale decisions (e.g., Asia Pacific, Middle East, South Africa) over a planning horizon of several months, but not exceeding one year, similar to the settings in \citet{wang2018planning}, \citet{alvarez2011robust}, and \citet{pantuso2016uncertainty}. This contrasts with operational-level models, which address voyage-specific planning and individual port calls over days or weeks, as in \citet{meng2015tailored}, \citet{korsvik2011large}, and \citet{cheng2025cargo}. Regions are modeled as nodes, with trade lanes as connections. Despite tramp shipping's route flexibility, stable crude oil production and consumption regions have established relatively stable trade routes \citep{prochazka2019contracting}. These routes are composed of trade lanes linked by ballast segments, and serve as the basis for route decisions, similar to \cite{pantuso2016uncertainty}. Routes range from single-lane round trips to multi-lane combinations. Demand and service are represented at a high level, with contracts aggregating demand along routes and accommodating uniform capacity types, representing transportation needs from multiple customers.

Deployment decisions entail the assignment of vessels to various trade lanes, under the constraints related to contracts (such as the volume of the cargo, the original and destination area, and the frequency of transportation for the cargo), spot market conditions (volume of cargoes available in spot market in a certain period and the original and destination area of a cargo), and ship capacities (the maximum volume of cargoes that can be carried on a ship). The process involves deciding the optimal routes for each ship, the transitions between different routes, and the frequency of operations each ship should perform on a given trade lane. These tactical capacity deployment choices directly dictate the operational sailing schedule from a long-run view. Ships also need to determine which type of cargo and the quantity to transport on the spot market to increase earnings, a decision that integrates the operational-level task of cargo selection into the tactical profit-maximization objective. Furthermore, selecting an appropriate vessel sailing speed is essential, as this key operational lever is optimized within the tactical model to balance voyage duration, fuel consumption, and emissions. This requires a balance between higher speeds, which increase revenue by enabling higher cargo transportation volume within a given time, and lower speeds, which reduce fuel costs and other expenses. Crucially, this is an operational-level decision which impacts the tactical year-long profits and carbon emissions. It is worth noting that shipowners in tramp shipping have a definite understanding of short-term market conditions, while long-term market conditions are more uncertain. When in short-term, where market conditions are certain, shipowners can make decisions based solely on the available information. However, for longer-term planning, where market information becomes uncertain, shipowners need to consider all possible scenarios. This approach ensures that they can meet contractual requirements and achieve higher profits when any of these scenarios actually realize in future. 

In summary, tactical planning in tramp shipping deployment involves a complex interplay of fulfilling contractual obligations, maximizing profits through spot market participation, and managing deployment decisions under varying levels of market uncertainty and the constraints of ship capacities. By integrating fundamental operational decisions into a long-term stochastic planning framework, our model bridges the tactical and operational levels, ensuring that the high-level plan faithfully reflects operational realities while accommodating long-term constraints and interactions.

\subsection{Introduction of CII to fleet deployment}
\label{sec:intro_CII_to_model}

This study incorporate the CII introduced by the IMO into fleet deployment problem, which add an additional layer of complexity to the basic problem. Based on \cite{imo2020fourth}, \cite{mepc2021336}, \cite{mepc2022352} and \cite{wang2021paradox}, we note two main formulations under discussion: the Supply-based CII and the Demand-based CII, defined as follows:
\begin{align}
    \text{Supply-based CII} &= \frac{M}{C \times D^T},\\
    \text{Demand-based CII} &= \frac{M}{\sum_{i} Q_i \times D^L_i}
    \label{eq:cii}
\end{align}
where $M$ represents the annual carbon emission (g) of the ship in a year, $C$ represents the deadweight tonnage (DWT) of the ship, $D^T$ represents the total distance traveled (in nautical miles) by the ship in a year, $Q_i$ represents the cargo volume (ton) carried during the $i_{th}$ laden voyage of the ship in a year, and $D^L_i$ represents the distance traveled (in nautical miles) during the $i_{th}$ laden voyage by the ship in a year. 

The fundamental difference lies in their denominators. Supply-Based CII is calculated based on the vessel's capacity and is suitable for formal ratings, for which data collection is relatively straightforward, but it may not accurately reflect actual transportation demands as well as Demand-Based CII, because it considers DWT rather than actual cargo volume transported. On the other hand, Demand-Based CII is calculated based on the actual cargo volume and more directly reflects actual transportation demands. However, it faces challenges in data collection and confidentiality, and is currently mainly used for trial assessments, according to \cite{mepc2022352}.

{
Critically, for the same vessel and annual emissions ($M$), the Supply-based CII denominator is always larger than or equal to its Demand-based counterpart. This relationship is proven by the following inequality chain:
\begin{align}
    \sum_{i} Q_i \times D_i^L \leq \max_i\{Q_i\} \times \sum_{i} D_i^L \leq C \times \sum_{i} D_i^L \leq C \times D^T.
    \label{neq:cii_denominator}
\end{align}

This mathematical relationship leads to a pivotal policy implication: if the same numerical CII threshold is applied to both formulations, the Demand-based CII imposes a stricter constraint. This is because it only recognizes laden voyages as productive work, resulting in a smaller denominator and thus a higher CII value for the same emissions. As the CII standard decreases, vessels under Demand-based CII reach the violation threshold sooner, whereas those under Supply-based CII can lower their CII by sailing in ballast, as shown in Figure \ref{fig:cii_change}. Specifically, since ballast voyages ($Q_i = 0$) contribute to emissions in the numerator $M$ but not to the denominator of the Demand-based CII, ship operators cannot improve their rating by optimizing ballast legs. In fact, the emissions from ballast sailing only worsen the CII score. In contrast, under the Supply-based CII, both emissions and distance from ballast sailing are accounted for in the numerator and denominator, respectively. Given that the carbon emission intensity during ballast voyages is typically lower than during laden voyages, increasing ballast distance can paradoxically help a vessel achieve a better Supply-based CII score by diluting the average emission intensity. This discrepancy creates a policy paradox: a stricter CII standard, while designed to reduce emissions, can incentivize vessels to undertake additional ballast sailing under the Supply-based CII, thereby increasing total carbon emissions. We provide a theoretical proof of the paradox for a single ship in Appendix~\ref{appendix:supply_cii_theory} to help readers better understand its mechanism. This paradox will be formally captured in our mathematical formulation in Section~\ref{sec:mathematical_formulation}, and its environmental implications will be revealed through the numerical analysis in Section~\ref{sec:compare_cii}.

\begin{figure}[!h]
    \centering
    \includegraphics[width=0.9\linewidth]{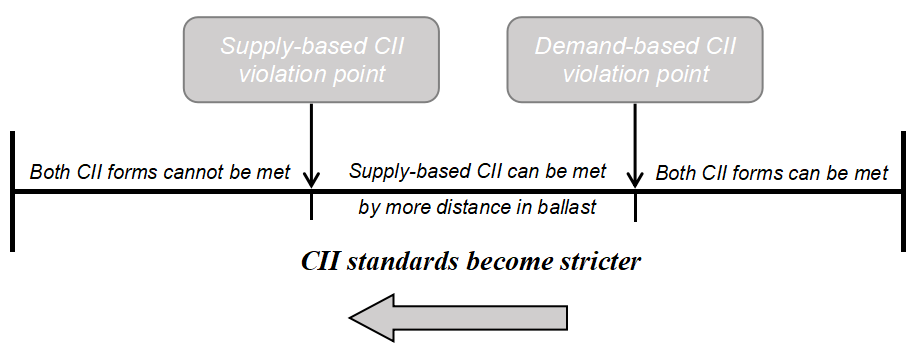}
    \caption{Results changing with CII standards}
    \label{fig:cii_change}
\end{figure}
}

From an annual perspective, each vessel needs to reduce its CII to below a given threshold. The introduction of CII presents challenges for existing fleet deployment because CII is an annual metric, and decisions made by vessels at any given time must be considered in conjunction with decisions made during other periods of the year to ensure that the CII remains below the threshold. This means that when a vessel makes a decision in the present, it must take into account all possible future uncertainties, making its current decision sufficiently conservative so that it can adjust decisions at future points in time to meet the annual CII threshold. To reflect the year-long constraints that CII imposes on fleet deployment and to capture the characteristic of short-term certainty and long-term uncertainty in market conditions, thereby exploring the impact of uncertain market information on fleet deployment, our study employ a two-stage stochastic programming model. The division into two stages is based on the degree of certainty of market information. In the first stage (the next few months), market information is fully known, while in the second stage (from the end of the first stage to the end of the year), market information is uncertain, including demand, freight rates, and fuel costs. For both stages, the constraints related to the market (such as demand volume and frequency) are independent of each other because we assume that the market information of each stage is unrelated to the other; however, the constraints related to CII are linked because CII is an annual metric that spans both the first and second stages.

In this study, we examine a shipping company that is making fleet deployment decisions for the remainder of the year. For a shipping company, fleet deployment planning may be required at any point during the year. Therefore, in our two-stage stochastic model, the initial time can be any point within the year. From this starting point, we account for the initial state of all vessels, including their current carbon emissions, work loads and routes they have been allocated to, and proceed with modeling the first and second stages. However, it is important to note that the later in the year the planning begins, the higher the risk of violating the CII. As year-end approaches, vessels are more likely to have high CII values, making it increasingly difficult to lower them to the required range through deployment adjustments.

\section{Model formulation}
\label{sec:model_formulation}

\subsection{Model setup}

In our deployment problem, planning begins at any point within a year, with the remaining time of that year serving as the planning period. This period is divided into two stages: the first stage (\textbf{P-1}) spans from the starting point to a few months later, but no later than year-end; the second stage (\textbf{P-2}) covers the period from the end of \textbf{P-1} to the year's conclusion. This division reflects differences in market information and confidence levels. For \textbf{P-1}, shipping companies have reliable information about the contract and spot markets. However, due to long-term market volatility, precise information for \textbf{P-2} is unavailable. This setting reflects real-world market fluctuations and planning uncertainty, which are not captured in the deterministic operational models proposed in previous studies, such as \cite{meng2015tailored}, \cite{korsvik2011large}, and \cite{cheng2025cargo}.

As described in Section \ref{sec:plan}, we define macro-level geographical areas as nodes and trade lanes as links connecting these nodes. This modeling approach differs fundamentally from the commonly studied port-to-port models incorporating time windows, which are typical in short-term operational planning literature like \cite{cheng2025cargo}. A route is then defined as a collection of specific trade lanes. Additionally, routes may include ballast segments connecting two trade lanes when the destination of one and the origin of the next are not geographically close. These ballast segments, where vessels travel without cargo, are an essential component of tramp shipping operations and are accounted for in our model. An illustration of this is shown in Figure \ref{fig:potential_route} \footnote{The world map here is sourced from the GeoPandas library in Python.}, where trade lane 1 (TR1) transports goods from North America to South America, trade lane 2 (TR2) transports goods from South America to Europe, and the two ballast sailing legs links the two trade lanes so that they can be combined into a route. Figure \ref{fig:abstract_route} presents an abstract representation of a route which is similar to Figure \ref{fig:potential_route}, consisting of TR1, TR2, and TR3, and following the sequence from TR1 to TR2 and then to TR3.

\begin{figure}[!h]
    \begin{subfigure}[b]{0.55\textwidth}
        \centering
        \includegraphics[width=\linewidth]{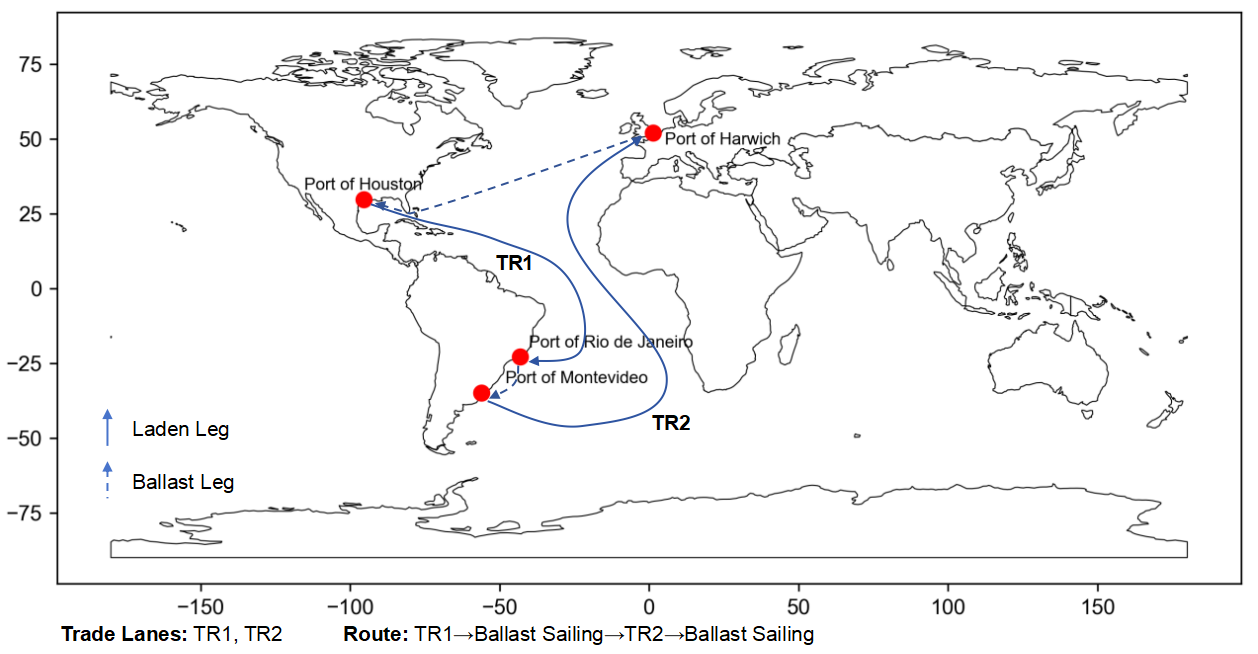}
        \caption{An example of potential route on world map}
        \label{fig:potential_route}
    \end{subfigure}%
    \hfill 
    \begin{subfigure}[b]{0.45\textwidth}
        \centering
        \includegraphics[width=\linewidth]{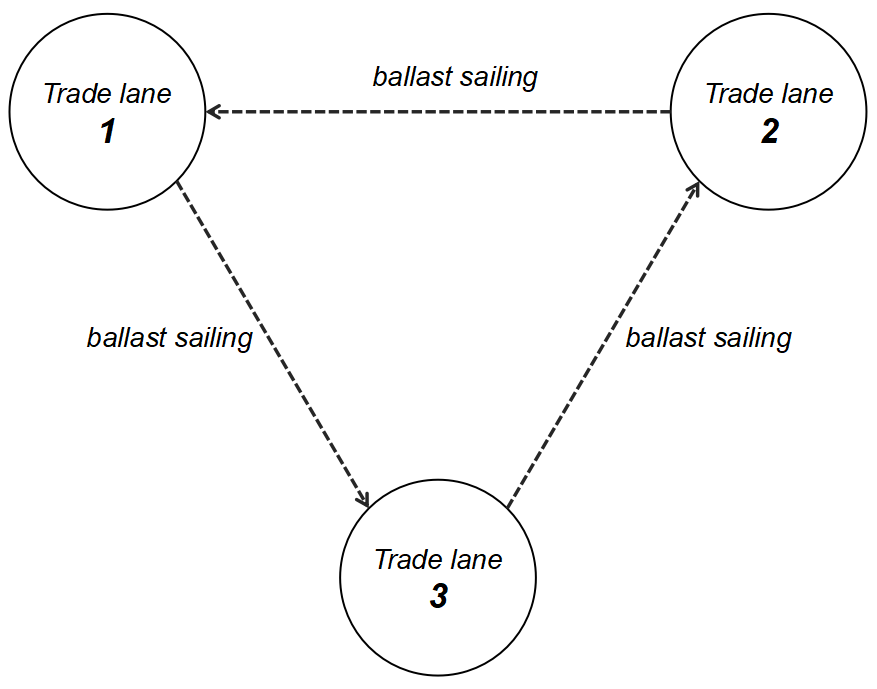}
        \caption{Abstract diagram of a route}
        \label{fig:abstract_route}
    \end{subfigure}
    \caption{Illustration of the deployment problem}
    \caption*{\raggedright \setstretch{1}Note: Figure \ref{fig:potential_route} shows a possible route consists of two trade lanes, one is from North America to South America, the other is from South America to Europe. The two trade lanes are linked by ballast sailing legs, so that ships can return to export areas from import areas. Figure \ref{fig:abstract_route} is an abstract represent of a possible route consists of three trade lanes and three ballast sailing legs to link the trade lanes.}
\end{figure}

\begin{table}[!h]
    \centering
    \renewcommand{\arraystretch}{1.2}
    \begin{tabular}{c|*{5}{m{0.8cm}}|*{6}{m{0.8cm}}}
    \hline
    \multirow{2}{*}{\textbf{Routes}} & \multicolumn{5}{c|}{Select the First Route in \textbf{P-1}} & \multicolumn{5}{c}{Route Transition in \textbf{P-1}} \\ \cline{2-11} 
    & \textbf{1} &\textbf{2} &\textbf{3} &\textbf{4} &
    \textbf{5} & \textbf{1} & \textbf{2} & \textbf{3} & \textbf{4} & \textbf{5} \\ 
    \hline
    \textbf{1}                       & 0          & 0          & 0          & 0          & 0          & 0          & 0          & 0          & 1          & 0\\ \hline
    \textbf{2}                       & 0          & 0          & 0          & 0          & 0          & 0          & 0          & 0          & 0          & 0\\ \hline
    \textbf{3}                       & 1          & 0          & 0          & 0          & 0          & 0          & 0          & 0          & 0          & 0\\ \hline
    \textbf{4}                       & 0          & 0          & 0          & 0          & 0          & 1          & 0          & 0          & 0          & 1\\ \hline
    \textbf{5}                       & 0          & 0          & 0          & 0          & 0          & 0          & 0          & 0          & 1          & 0\\ \hline
    \end{tabular}
    \caption{Route Transition Matrix in \textbf{P-1}}
    \caption*{\raggedright \setstretch{1}Note: This table shows the route transition matrix in \textbf{P-1} (the first planning stage). The first column of rows represent the original routes (before transit decisions), and the second row of columns represent the routes selected after the transit decision. A value of \textbf{1} in a grid (i, j) means the ship will transition from route \textbf{i} to route \textbf{j}. For example, in the grid (3,1), the value 1 indicates that the ship will transition from route 3 to route 1. This matrix helps in understanding how routes change based on ship transit decisions.}
    \label{tab:p1 route transition}
\end{table}

\begin{table}[!h]
    \centering
    \renewcommand{\arraystretch}{1.2}
    \begin{tabular}{c|*{5}{m{0.8cm}}|*{6}{m{0.8cm}}}
    \hline
    \multirow{2}{*}{\textbf{Routes}} & \multicolumn{5}{c|}{Select the First Route in \textbf{P-2}} & \multicolumn{5}{c}{Route Transition in \textbf{P-2}} \\ \cline{2-11} 
    & \textbf{1} &\textbf{2} &\textbf{3} &\textbf{4} &
    \textbf{5} & \textbf{1} & \textbf{2} & \textbf{3} & \textbf{4} & \textbf{5} \\ 
    \hline
    \textbf{1}                       & 0          & 0          & 0          & 0          & 0          & 0          & 0          & 0          & 0          & 0\\ \hline
    \textbf{2}                       & 0          & 0          & 0          & 0          & 0          & 0          & 0          & 1          & 1          & 0\\ \hline
    \textbf{3}                       & 0          & 0          & 0          & 0          & 0          & 0          & 1          & 0          & 0          & 0\\ \hline
    \textbf{4}                       & 0          & 0          & 0          & 0          & 0          & 0          & 1          & 0          & 0          & 0\\ \hline
    \textbf{5}                       & 0          & 0          & 0          & 1          & 0          & 0          & 0          & 0          & 0          & 0\\ \hline
    \end{tabular}
    \caption{Route Transition Matrix in \textbf{P-2}}
    \caption*{\raggedright \setstretch{1}Note: See notes in Table \ref{tab:p1 route transition}. \textbf{P-2} denotes the second planning stage.}
    \label{tab:p2 route transition}
\end{table}

To model vessel route selection and transitions, we introduce binary variables to indicate whether a vessel operates on a specific route during a given stage or scenario, or switches between routes within or across periods. Notably, we assume that each vessel completes an integer number of full cycles. Additionally, the duration of route switching is represented as fixed parameters, reflecting the distance between the starting points of two routes.

During \textbf{P-1} and \textbf{P-2}, as well as the transition between the two, route switching follows a specific sequence. Within each stage, ships complete their assigned cargo transportation tasks along a route at a set speed before switching to another route. This approach minimizes costs and carbon emissions. Conversely, leaving a route mid-task and returning later within the same stage would increase both costs and emissions.

To illustrate this, consider the example in Tables \ref{tab:p1 route transition} and \ref{tab:p2 route transition}, where there are five routes. Suppose a ship starts on route 3 at the beginning of the planning period. During \textbf{P-1}, it transitions sequentially to routes 1, 4, and 5, completing tasks on each before entering \textbf{P-2}. In \textbf{P-2}, it switches to routes 4, 2, and 3 in succession to perform its transportation tasks. The route transition process is depicted in Figure \ref{fig:route_change}, which is more illustrative.

\begin{figure}[!h]
    \centering
    \includegraphics[width=0.75\linewidth]{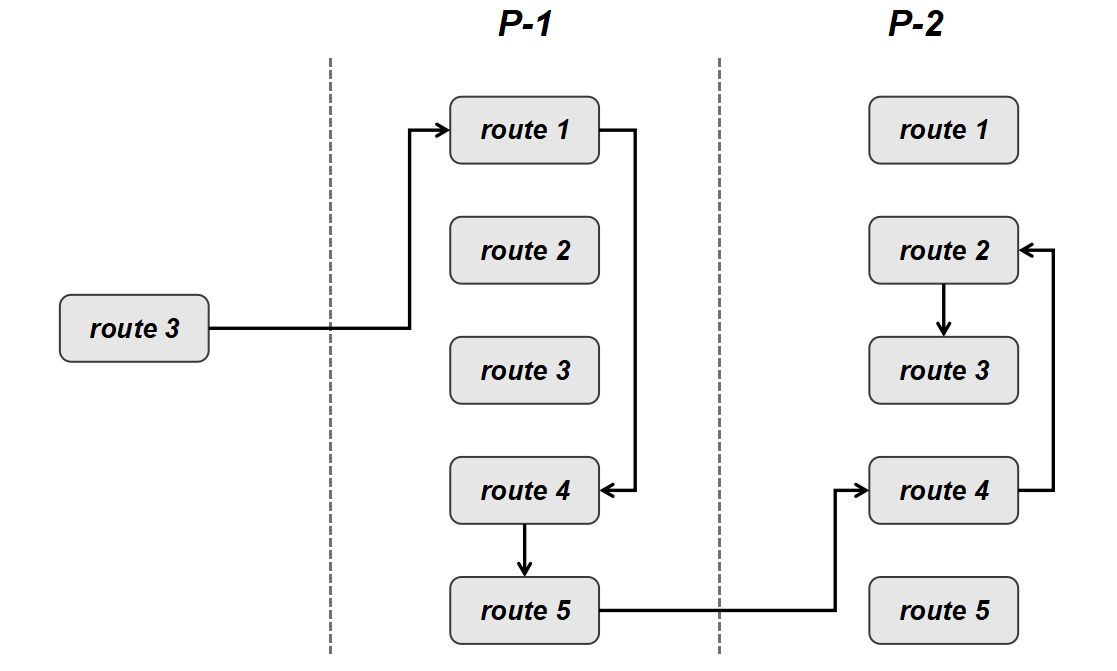}
    \caption{Route transition during \textbf{P-1} and 
    \textbf{P-2}}
    \caption*{\raggedright \setstretch{1}Note: This figure illustrate the transit decision process described in Table \ref{tab:p1 route transition} (the first planning stage \textbf{P-1}) and Table \ref{tab:p2 route transition} (the second planning stage \textbf{P-2}).}
    \label{fig:route_change}
\end{figure}

Both \textbf{P-1} and \textbf{P-2} involve two processes: transitioning from the initial route to the first selected route and the sequence of subsequent route transitions. These processes are represented by two matrices. The first, an asymmetric matrix $A$, has a $1$ in the $i$-th row and $j$-th column to indicate a transition from the initial route $i$ to the first selected route $j$. $i$ and $j$ may be equal if the initial route is the same as the first task route. In $A$, only one element is $1$, with all others being $0$.

The second, a symmetric matrix $D$ with $n$ rows and $n$ columns, represents subsequent route transitions. Let $D_{ij}$ denote the element in the $i$-th row and $j$-th column of the transition matrix $D$. A value of $D_{ij} = 1$ (and symmetrically $D_{ji} = 1$) indicates a transition between routes $i$ and $j$. The diagonal elements are set to zero, as transitions represent changes between distinct routes. The sequence of transitions is not directly indicated by row and column positions but is constrained by connections between nodes. According to equation \ref{eq:matrix1}, the numerical relationships ensure that each matrix uniquely represents one sequence of route transitions and vice versa, based on tree properties in graph theory:
\begin{align}
    \sum_{i=1}^{n}\sum_{j=1}^{n} D_{ij} = 2 \times (m-1),\label{eq:matrix1}
\end{align}
where $D_{ij}$ is the element in the $i$-th row and $j$-th column of matrix $D$, and $m$ is the total number of routes selected in the stage by the ship.

Given the uncertainties in contract demands, spot market demands, fuel prices, and freight rates, these factors are modeled in \textbf{P-2} as stochastic parameters following specific probability distributions. They are incorporated into the constraints of \textbf{P-2}, influencing solutions for both \textbf{P-1} and \textbf{P-2}. For CII, the constraints apply across both stages since CII is calculated based on annual carbon emissions relative to workload intensity \citep{mepc2022352}. Thus, the CII constraints must be satisfied regardless of the \textbf{P-2} scenario.

In both \textbf{P-1} and each scenario in \textbf{P-2}, shipowners must decide for each vessel its route selection and route transitions, speed choices, the number of voyages to operate on each selected route, which contract and spot market cargoes to carry, and the loading volume for each cargo type. When no suitable cargo is available, shipowners must also decide whether a vessel should remain idle at port or perform ballast sailing, which directly affects compliance with the annual CII constraint.

The two stages are tightly interrelated through two types of constraints: (1) the annual CII constraint, which couples the carbon intensity performance of the entire planning horizon, and (2) the route transition constraint, which requires each vessel at the beginning of \textbf{P-2} to inherit its route position at the end of \textbf{P-1}. These constraints ensure that first-stage decisions directly restrict the feasible set and cost performance of second-stage decisions, and conversely, expected conditions in the second stage influence optimal first-stage planning.

\subsection{Mathematical formulation}
\label{sec:mathematical_formulation}

In this section, we start by defining notations of sets, parameters, and decision variables to mathematically formulate the model described in Section \ref{sec:model_formulation}, as shown in Table \ref{tab:notations}. A two-stage stochastic mixed integer linear programming (MILP) model is then provided to form the problem.

{\small
\begin{longtable}[htbp]{@{}l p{7.5cm} p{2.5cm}@{}}
\caption{Notations of sets, parameters and decision variables} \\
\toprule
\textbf{Symbol} & \textbf{Description} & \textbf{Unit} \\
\midrule
\endfirsthead
\multicolumn{3}{@{}l@{}}{Notations of sets, parameters and decision variables (continued)} \\
\toprule
\endhead

\textbf{Sets} & & \\
\midrule
$ V $ & Set of ships; $v$. & -- \\
$ N $ & Set of trade lanes; $i$. & -- \\
$ R $ & Set of trade routes; $r$. & -- \\
$ K $ & Set of capacity types; $k$. & -- \\
$ C $ & Set of contracts; $c$. & -- \\
$S$ & Set of scenarios; $s$. & -- \\
$\mathcal{E}_v$ & Set of speed alternatives for ship $v$; $e$. & -- \\
$ R_v \subseteq R $ & The set of routes that can be sailed by ship $v$; $r$. & -- \\
$ R_{iv} \subseteq R $ & The set of routes servicing trade lane $i$ that can be sailed by ship $v$; $r$. & -- \\
$C_{i}^{TR} \subseteq C$ & The set of contracts serviced by trade lane $i$; $c$. & -- \\
$V_i \subseteq V$ & The set of ships that can sail trade lane $i$; $v$. & -- \\
$C_{k}^{CP} \subseteq C$ & The set of contracts compatible with capacity type $k$; $c$. & -- \\
$K_c \subseteq K$ & The set of capacity types compatible with contract $c$; $k$. & -- \\

\midrule
\textbf{CII Formulation Parameters} & & \\
\midrule
$ M $ & Annual carbon emission of a ship in a year (used to introduce the CII calculation). & gram (g) \\
$ C $ & Deadweight tonnage of a ship (used to introduce the CII calculation). & tonne (t) \\
$ Q_i $ & The cargo volume carried by a ship during the $i_{th}$ laden voyage (used to introduce the CII calculation). & tonne (t) \\
$ D^T $ & Total distance traveled by a ship in a year (used to introduce the CII calculation). & nautical mile \\
$ D_i^L $ & The distance traveled during the $i_{th}$ laden voyage by a ship (used to introduce the CII calculation). & nautical mile \\

\midrule
\textbf{Deterministic Parameters} & & \\
\midrule
$ M_{v}^{1}, M_{v}^{2} $ & Number of available service days for ship $v \in V$ during \textbf{P-1} and \textbf{P-2}, respectively. & day \\
$ Q_{vk} $ & Volume of capacity type $k\in K$ installed on ship $v\in V$. & tonne (t) \\
$ T_{vre} $ & Total travel period for ship $v\in V$ to complete a round trip on route $r \in R_v$ with speed alternative $e\in \mathcal{E}_v$. & day \\
\( T_{vrr'e} \) & Total travel period for ship \( v\in V \) to complete a transfer trip from route \( r \in R_v \) to route \( r'\in R_v \) with speed \( e \in \mathcal{E}_v \). & day \\
\( F_c^{1}, F_c^{2} \) & Stochastic frequency requirement of contract \( c \in C \) during \textbf{P-1} and \textbf{P-2}, respectively. & voyage \\
\( D_c \) & Deterministic demand of contract \( c \in C \) during \textbf{P-1}. & tonne (t) \\
\( D_{ik}^{SP} \) & Volume of spot cargo available on trade lane \( i\in N $ compatible with capacity type $ k \in K $ during \textbf{P-1}. & tonne (t) \\
\( C^{RT}_{vre} \) & Cost for ship $v\in V$ to complete a round trip on route $ r \in R_v $ with speed alternative $ e \in \mathcal{E}_v $ during \textbf{P-1}. & USD (\$) \\
\( C^{TF}_{vrr'e} \) & Cost for ship $ v \in V $ to transfer route from $r \in R_v$ to $r'\in R_v$ with speed alternative $e \in \mathcal{E}_v$ during \textbf{P-1}. & USD (\$) \\
\( C_{v}^{ballast} \) & Cost for ship $v\in V$ to ballast sail per day during \textbf{P-1}. & USD (\$)/day \\
\( C_{v}^{port} \) & Cost for ship $v \in V$ at port per day during \textbf{P-1}. & USD (\$)/day \\
\( R_{ik}^{SP} \) & Revenue of delivering one ton of spot cargo with capacity type \( k \in K \) on trade lane \( i\in N \) during \textbf{P-1}. & USD (\$)/t \\
\( E_{v}^{port} \) & Carbon emission of ship $v$ at port per day. & gram (g)/day \\
\( E_{vre} \) & Carbon emission of ship $v\in V$ to complete a round trip on route \( r\in R_v \) with speed alternative \( e \in \mathcal{E}_v \). & gram (g) \\
\( E_{vrr'e} \) & Carbon emission of ship $v\in V$ to complete a transfer trip from route \( r \in R_v \) to route \( r' \in R_v \) with speed \( e \in \mathcal{E}_v \). & gram (g) \\
\( E_{v}^{ballast} \) & Carbon emission of ship $v\in V$ ballast sailing per day. & gram (g)/day \\
\( E_{v}^{port} \) & Carbon emission of ship $v\in V$ at port per day. & gram (g)/day \\
\( E_{v}^{0} \) & Carbon emission of ship $v\in V$ before planning. & gram (g) \\
\( W_{v}^{0} \) & Working intensity (laden sailing distance $\times$ corresponding loads) of ship $v\in V$ before planning. & t$\cdot$n mile \\
\( L_{i} \) & Laden sailing distance of trade lane $i\in N$. & nautical mile \\
\( L_{v}^{ballast} \) & Distance of ship $v\in V$ per day when ballast sailing. & nautical mile/day \\
\( L_{r}^{route} \) & Distance of route $r\in R$. & nautical mile \\
\( L_{rr'} \) & Distance of switching from route $r\in R$ to $r'\in R$. & nautical mile \\
\(y_{vr}^{1,0}\) & 0-1 variable, whether ship $v\in V$ on route $r\in R$ before planning. & -- \\
\(CII_{v}\) & The CII standard of ship $v$. & g/(t$\cdot$nautical mile) \\

\midrule
\textbf{Stochastic Parameters} & & \\
\midrule
$ p^s $ & The probability of scenarios $s\in S$ taking place during \textbf{P-2}. & -- \\
$ D_{cs} $ & Stochastic demand of contract $c\in C$ during \textbf{P-2} in scenario $s\in S$. & tonne (t) \\
\( D_{iks}^{SP} \) & Volume of spot cargo available on trade lane \( i\in N \) compatible with capacity type \( k\in K \) during \textbf{P-2} in scenario $s\in S$. & tonne (t) \\
\( C^{RT}_{vres} \) & Cost for ship \( v\in V \) to complete a round trip on route \( r\in R_v \) with speed alternative \( e\in \mathcal{E}_v \) during \textbf{P-2} in scenario $s$. & USD (\$) \\
\( C^{TF}_{vrr'es} \) & Cost for ship \( v\in V \) to transfer route from $r\in R_v$ to $r' \in R_v$ with speed alternative $e$ during \textbf{P-2} in scenario $s\in S$. & USD (\$) \\
\( C_{vs}^{ballast} \) & Cost for ship $v\in V$ to ballast sail per day during \textbf{P-2} in scenario $s\in S$. & USD (\$)/day \\
\( C_{vs}^{port} \) & Cost for ship $v\in V$ at port per day during \textbf{P-2} in scenario $s\in S$. & USD (\$)/day \\
\( R_{iks}^{SP} \) & Revenue of delivering one ton of spot cargo with capacity type \( k\in K \) on trade lane \( i\in N \) during \textbf{P-2} in scenario $s\in S$. & USD (\$)/t \\

\midrule
\textbf{Decision Variables} & & \\
\midrule
$ x_{vre}, x_{vres} $ & Integer variable, number of round trips sailed by ship $v\in V$ on route $r\in R_v$ with speed $e\in \mathcal{E}_v$ during P-1 and on route $r$ during P-2 in scenario $s\in S$. & voyage \\
$ y_{vr}^1, y_{vrs}^2 $ & Binary variable, whether ship $v\in V$ sails on route $r\in R_v$ during P-1 and on $r$ during P-2 in scenario $s\in S$. & -- \\
\( y_{vrs}^{2,0} \) & Binary variable, whether ship \( v\in V \) sails on route $r\in R_v$ at the beginning of scenarios $s\in S$. & -- \\
\( y_{vrr'e}^{1,1}, y_{vrr'es}^{2,1} \) & Binary variable, if ship \( v\in V \) is transferred from route \( r\in R_v \) to \( r'\in R_v \) with speed \( e \in\mathcal{E}_v$ at the beginning of \textbf{P-1}, \( y_{vrr'e}^{1,1} = 1 \), otherwise \( y_{vrr'e}^{1,1} = 0 \). The same meaning for $y_{vrr'es}^{2,1}$ at the beginning of \textbf{P-2}. & -- \\
\( y_{vrr'e}^{1,2}, y_{vrr'es}^{2,2} \) & Binary variable, if ship \( v\in V \) is transferred between route \( r\in R_v \) and $ r'\in R_v $ with speed \( e\in\mathcal{E}_v \) during \textbf{P-1}, \( y_{vrr'e}^{1,2} = 1 \), otherwise \( y_{vrr'e}^{1,2} = 0 \). The same meaning for $y_{vrr'es}^{2,2}$ during \textbf{P-2}. & -- \\
\( q^{C}_{ivkc}, q^{C}_{ivkcs} \) & Volume of contract \( c\in C \) carried by capacity type \( k\in K_c \) installed on ship \( v\in V \) on trade lane \( i\in N \) in \textbf{P-1} and in scenarios $s\in S$. & tonne (t) \\
\( q_{ivk}^{SP}, q_{ivks}^{SP} \) & Volume of spot cargo carried by capacity type \( k\in K \) installed on ship $ v\in V $ on trade lane $ i\in N $ in \textbf{P-1} and in scenarios $s\in S$. & tonne (t) \\
\( T_{v}^{1,port}, T_{vs}^{2,port} \) & Time spent in port by ship $v\in V$ other than performing tasks during \textbf{P-1} and during \textbf{P-2} in scenario $s\in S$. & day \\
\( T_{v}^{1,ballast}, T_{vs}^{2,ballast} \) & Time spent on ballast sailing by ship $v\in V$ other than performing tasks during \textbf{P-1} and during \textbf{P-2} in scenario $s\in S$. & day \\
\bottomrule
\label{tab:notations}
\end{longtable}}

\begin{align}
    \min \quad & \sum_{v \in V} \sum_{r \in R_v} \sum_{e \in \mathcal{E}_v} C^{RT}_{vre} x_{vre} + \sum_{v \in V} \sum_{r \in R_v} \sum_{r' \in R_v} \sum_{e \in \mathcal{E}_v} C^{TF}_{vrr'e} y_{vrr'e}^{1,1} + \sum_{v \in V} \sum_{r \in R_v} \sum_{r' \in R_v \setminus \{r\} } \sum_{e \in \mathcal{E}_v} \frac{1}{2} C^{TF}_{vrr'e} y_{vrr'e}^{1,2} \notag \\
    & + \sum_{v \in V} (C_{v}^{ballast}\cdot T_{v}^{1,ballast} + C_{v}^{port}\cdot T_{v}^{1,port}) - \sum_{i \in N} \sum_{v \in V_i} \sum_{k \in K} R_{ik}^{SP} q_{ivk}^{SP} \notag \\
    & + \sum_{s \in S} p^s [ \sum_{v \in V} \sum_{r \in R_v} \sum_{e \in \mathcal{E}_v} C^{RT}_{vres} x_{vres} + \sum_{v \in V} \sum_{r \in R_v} \sum_{r' \in R_v} \sum_{e \in \mathcal{E}_v} C^{TF}_{vrr'es} y_{vrr'es}^{2,1} \notag \\ & + \sum_{v \in V} \sum_{r \in R_v} \sum_{r' \in R_v \setminus \{r\}} \sum_{e \in \mathcal{E}_v} \frac{1}{2} C^{TF}_{vrr'es} y_{vrr'es}^{2,2} \notag \\
    & + \sum_{v \in V} (C_{vs}^{ballast}\cdot T_{vs}^{1,ballast} + C_{vs}^{port}\cdot T_{vs}^{1,port}) - \sum_{i \in N} \sum_{v \in V_i} \sum_{k \in K} R_{iks}^{SP} q_{ivks}^{SP}]
    \label{eq:obj}
\end{align}
\begin{align}
    \sum_{e \in \mathcal{E}_v}x_{vre} &\leq \mathcal{M} y_{vr}^1 & \forall v \in V, r \in R_v \label{eq:route_select_1}
\end{align}
\begin{align}
    \sum_{e \in \mathcal{E}_v} x_{vres} &\leq \mathcal{M} y_{vrs}^2 & \forall v \in V, r \in R_v, s \in S \label{eq:route_select_2}
\end{align}
\begin{align}
    y_{vr}^{1,0} &= \sum_{r' \in R_v} \sum_{e \in \mathcal{E}_v} y_{vrr'e}^{1,1}  & \forall v \in V, r \in R_v \label{eq:route_select_3}
\end{align}
\begin{align}
   y_{vr'}^1 &\geq \sum_{r \in R_v} \sum_{e \in \mathcal{E}_v}  y_{vrr'e}^{1,1} & \forall v \in V, r' \in R_v \label{eq:route_select_4}
\end{align}
\begin{align}
    \sum_{e \in \mathcal{E}_v} y_{vrr'e}^{1,2} &\leq 1 - (y_{vr}^1 - y_{vr'}^1) & \forall v \in V, r \in R_v, r' \in R_v\setminus \{r\} \label{eq:route_select_5}
\end{align}
\begin{align}
   y_{vrr'e}^{1,2} &= y_{vr're}^{1,2} & \forall v \in V, r \in R_v, r' \in R_v\setminus \{r\}, e \in \mathcal{E}_v \label{eq:route_select_6}
\end{align}
\begin{align}
    \sum_{r \in R_v} \sum_{r' \in R_v} \sum_{r'' \in R_v\setminus \{r'\}} \sum_{e \in \mathcal{E}_v} (y_{vrr'e}^{1,1} + \frac{1}{2} y_{vr'r''e}^{1,2}) &= \sum_{r \in R_v} y_{vr}^1 & \forall v \in V \label{eq:route_select_7}
\end{align}
\begin{align}
    y_{vr's}^{2,0} &\leq 2 - \sum_{r \in R_v} \sum_{r'' \in R_v \setminus \{r'\}} \sum_{e \in \mathcal{E}_v} (y_{vrr'e}^{1,1} + y_{vr'r''e}^{1,2}) & \forall v \in V, r' \in R_v, s \in S \label{eq:route_select_8}
\end{align}
\begin{align}
    y_{vr's}^{2,0} &\leq \sum_{r \in R_v} \sum_{r'' \in R_v \setminus \{r'\}} \sum_{e \in \mathcal{E}_v} (y_{vrr'e}^{1,1} + y_{vr'r''e}^{1,2}) & \forall v \in V, r' \in R_v, s \in S \label{eq:route_select_9}
\end{align}
\begin{align}
    \sum_{r \in R_v} y_{vrs}^{2,0} &= 1 & \forall v \in V, s \in S \label{eq:route_select_10}
\end{align}
\begin{align}
    y_{vrs}^{2,0} &= \sum_{r' \in R_v} \sum_{e \in \mathcal{E}_v} y_{vrr'es}^{2,1}  & \forall v \in V, r \in R_v, s \in S \label{eq:route_select_11}
\end{align}
\begin{align}
   y_{vr's}^2 &\geq \sum_{r \in R_v} \sum_{e \in \mathcal{E}_v}  y_{vrr'es}^{2,1} & \forall v \in V, r' \in R_v, s \in S \label{eq:route_select_12}
\end{align}
\begin{align}
    \sum_{e \in \mathcal{E}_v} y_{vrr'es}^{2,2} &\leq 1 - (y_{vrs}^2 - y_{vr's}^2) & \forall v \in V, r \in R_v, r' \in R_v\setminus \{r\}, s \in S \label{eq:route_select_13}
\end{align}
\begin{align}
    \sum_{r \in R_v} \sum_{r' \in R_v} \sum_{r'' \in R_v\setminus \{r'\}} \sum_{e \in \mathcal{E}_v} (y_{vrr'es}^{2,1} + \frac{1}{2} y_{vr'r''es}^{2,2}) &= \sum_{r \in R_v} y_{vrs}^2 & \forall v \in V, s \in S \label{eq:route_select_14}
\end{align}
\begin{align}
   y_{vrr'es}^{2,2} &= y_{vr'res}^{2,2} & \forall v \in V, r \in R_v, r' \in R_v\setminus \{r\}, e \in \mathcal{E}_v \label{eq:route_select_15}
\end{align}
\begin{align}
    \sum_{r \in R_{v}} \sum_{e \in \mathcal{E}_{v}} T_{vre} x_{vre} + \sum_{r \in R_{v}} \sum_{r' \in R_{v}} \sum_{e \in \mathcal{E}_{v}} T_{vrr'e} y_{vrr'e}^{1,1} + & \sum_{r \in R_{v}} \sum_{r' \in R_{v} \setminus \{r\} } \sum_{e \in \mathcal{E}_{v}} \frac{1}{2} T_{vrr'e} y_{vrr'e}^{1,2} \notag \\ & + T_{v}^{1,ballast} + T_{v}^{1,port} = M_{v}^{1} & \forall v \in V \label{eq:service-days_1}
\end{align}
\begin{align}
    \sum_{r \in R_{v}} \sum_{e \in \mathcal{E}_{v}} T_{vre} x_{vres} + \sum_{r \in R_{v}} \sum_{r' \in R_{v}} \sum_{e \in \mathcal{E}_{v}} T_{vrr'e} y_{vrr'es}^{2,1} + & \sum_{r \in R_{v}} \sum_{r' \in R_{v}} \sum_{e \in \mathcal{E}_{v}} \frac{1}{2} T_{vrr'e} y_{vrr'es}^{2,2} \notag \\ & + T_{vs}^{2,ballast} + T_{vs}^{2,port} = M_{v}^{2} & \forall v \in V, s \in S \label{eq:service-days_2}
\end{align}
\begin{align}
    \sum_{v \in V_i} \sum_{r \in R_{iv}} \sum_{e \in \mathcal{E}_{v}} x_{vre} &\geq F_c^{1} & \forall i \in N, \forall c \in C_{i}^{TR} \label{eq:contract-frequency_1}
\end{align}
\begin{align}
    \sum_{v \in V_i} \sum_{r \in R_{iv}} \sum_{e \in \mathcal{E}_{v}} x_{vres} &\geq F_c^{2} & \forall i \in N, \forall c \in C_{i}^{TR}, s \in S \label{eq:contract-frequency_2}
\end{align}
\begin{align}
    \sum_{v \in V_i} \sum_{k \in K_c} q^{C}_{ivkc} &= D_c & \forall i \in N, c \in C_{i}^{TR} \label{eq:contract-demand_1}
\end{align}
\begin{align}
    \sum_{v \in V_i} \sum_{k \in K_c} q^{C}_{ivkcs} &= D_{cs} & \forall i \in N, c \in C_{i}^{TR}, s \in S \label{eq:contract-demand_2}
\end{align}
\begin{align}
    \sum_{r \in R_{iv}} \sum_{e \in \mathcal{E}_v}  Q_{vk} x_{vre} &\geq \sum_{c \in C^{TR}_{i} \cap C^{CP}_k} q^{C}_{ivkc} + q_{ivk}^{SP} & \forall i \in N, v \in V_i, k \in K \label{eq:capacity_1}
\end{align}
\begin{align}
    \sum_{r \in R_{iv}} \sum_{e \in \mathcal{E}_v} Q_{vk} x_{vres} &\geq \sum_{c \in C^{TR}_{i} \cap C^{CP}_k} q^{C}_{ivkcs} + q_{ivks}^{SP} & \forall i \in N, v \in V_i, k \in K \label{eq:capacity_2}
\end{align}
\begin{align}
    \sum_{v \in V_i} q_{ivk}^{SP} &\leq D_{ik}^{SP} & \forall i \in N, k \in K \label{eq:spot-cargo_1}
\end{align}
\begin{align}
    \sum_{v \in V_i} q_{ivks}^{SP} &\leq D_{iks}^{SP} & \forall i \in N, k \in K, s \in S \label{eq:spot-cargo_2}
\end{align}
\begin{align}
    & E_{v}^{0} + \sum_{r \in R_{v}} \sum_{r' \in R_{v}} \sum_{r'' \in R_{v} \setminus \{r'\}} \sum_{e \in \mathcal{E}_v} [E_{vr'e} \cdot (x_{vr'e} + x_{vr'es}) + E_{vrr'e} \cdot (y_{vrr'e}^{1,1} + y_{vrr'es}^{2,1}) \notag \\ & + \frac{1}{2} E_{vr'r''e} \cdot (y_{vr'r''e}^{1,2} + y_{vr'r''es}^{2,2})] + E_{v}^{ballast} \cdot (T_{v}^{1,ballast} + T_{v}^{2,ballast}) + E_{v}^{port} \cdot (T_{v}^{1,port} + T_{vs}^{2,port}) \notag \\ & \leq CII_{v}  \cdot [W_{v}^{0} + \sum_{i \in N} \sum_{k \in K} L_{i} (\sum_{c \in C^{TR}_{i} \cap C^{CP}_k} q^{C}_{ivkc} + q_{ivk}^{SP} + \sum_{c \in C^{TR}_{i} \cap C^{CP}_k} q^{C}_{ivkcs} + q_{ivks}^{SP})] \ \ \forall v \in V, s \in S \label{eq:demand_cii}
\end{align}
\begin{align}
    & E_{v}^{0} + \sum_{r \in R_{v}} \sum_{r' \in R_{v}} \sum_{r'' \in R_{v} \setminus \{r'\}} \sum_{e \in \mathcal{E}_v} [E_{vr'e} \cdot (x_{vr'e} + x_{vr'es}) + E_{vrr'e} \cdot (y_{vrr'e}^{1,1} + y_{vrr'es}^{2,1}) \notag \\ & + \frac{1}{2} E_{vr'r''e} \cdot (y_{vr'r''e}^{1,2} + y_{vr'r''es}^{2,2})] + E_{v}^{ballast} \cdot (T_{v}^{1,ballast} + T_{vs}^{2,ballast}) + E_{v}^{port} \cdot (T_{v}^{1,port} + T_{vs}^{2,port}) \notag \\ & \leq CII_{v}  \cdot \{W_{v}^{0} + [\sum_{r \in R} L_{r}^{route}\cdot (\sum_{e \in \mathcal{E}_v}x_{vre}+ \sum_{e \in \mathcal{E}_v}x_{vres}) + \sum_{r \in R_{v}} \sum_{r' \in R_{v} \setminus \{r\}} \sum_{e \in \mathcal{E}_v} L_{rr'} \cdot (\frac{1}{2} y_{vrr'e}^{1,2} + \frac{1}{2} y_{vrr'es}^{2,2} \notag \\ & + y_{vrr'e}^{1,1} + y_{vrr'es}^{2,1}) + L_{v}^{ballast}\cdot (T_{v}^{1,ballast} + T_{vs}^{2,ballast})]\cdot \sum_{k \in K} Q_{vk}\} \ \ \ \forall v \in V, s \in S \label{eq:supply_cii}
\end{align}
\begin{align}
    x_{vre}, x_{vres} &\in \mathbb{N} & \forall v \in V, r \in R_v, s \in S
\label{eq:variable_1}
\end{align}
\begin{align} 
    y_{vr}^1, y_{vrs}^2, y_{vrr'e}^{1,1}, y_{vrr'es}^{2,1}, y_{vrr'e}^{1,2}, y_{vrr'es}^{2,2} &\in \{0, 1\} & \forall v \in V, r \in R_v, r' \in R_v, e\in \mathcal{E}_v, s \in S
    \label{eq:variable_2}
\end{align}
\begin{align}
    q_{ivkc}^{C}, q_{ivkcs}^{C} &\geq 0  & \forall i\in N, v \in V_i, k \in K, c\in C_i^{TR}\cap C_k^{CP}, s \in S \label{eq:variable_3}
\end{align}
\begin{align}
    q_{ivk}^{SP}, q_{ivks}^{SP} &\geq 0  & \forall i\in N, v \in V_i, k \in K, s \in S \label{eq:variable_4}
\end{align}
\begin{align}
    T_{v}^{1,ballast}, T_{vs}^{2,ballast}, T_{v}^{1,port}, T_{vs}^{2,port} &\geq 0  & \forall v \in V, s \in S \label{eq:variable_5}
\end{align}

In the above model, the objective function \eqref{eq:obj} minimizes the net cost, defined as the total cost of shipping operations minus the revenue from transporting spot market cargoes. The cost components include fleet operation and fuel expenses, with parameters $C^{RT}$, $C^{TF}$, $C^{ballast}$, and $C^{port}$, corresponding to the costs incurred during transportation, route switching, ballast sailing, and staying at port, respectively. The revenue is determined by the unit revenue of each spot cargo ($R^{SP}$) and the quantity transported ($q^{SP}$). The model minimizes the difference between cost and revenue terms. The objective function accounts for operations in both \textbf{P-1} and \textbf{P-2}: while parameters in \textbf{P-1} are deterministic, the expectations over all possible scenarios are considered in \textbf{P-2}.

Constraints \eqref{eq:route_select_1} and \eqref{eq:route_select_2} ensure that only ships assigned to route $r$ ($y_{vr}^1=1$) can make $x_{vre}>0$ trips on route $r$ with any speed $e$. Constraints \eqref{eq:route_select_3} to \eqref{eq:route_select_15} define the transition flow of ships in \textbf{P-1} and \textbf{P-2}. In detail, Constraint \eqref{eq:route_select_3} ensures that ships can only transfer from route $r$ to only one route $r'$ with one speed $e$ at the beginning of \textbf{P-1}. Constraint \eqref{eq:route_select_4} and \eqref{eq:route_select_7} together require if a ship decides to transfer to route $r'$ from any route $r$ with any speed $e$, it must sail on route $r'$ during \textbf{P-1}. Constraint \eqref{eq:route_select_5} and \eqref{eq:route_select_7} together ensure if a ship decide to sail on both route $r$ and $r'$ during \textbf{P-1}, it must perform a transfer from $r$ to $r'$ (or from $r'$ to $r$) with a selected speed $e$ during \textbf{P-1}. Constraint \eqref{eq:route_select_6} means the transfer matrix as depicted in Table \ref{tab:p1 route transition} must be symmetric. Here, an example to illustrate the purpose of Constraints \eqref{eq:route_select_4}, \eqref{eq:route_select_5}, \eqref{eq:route_select_6} and \eqref{eq:route_select_7} together is introduced. A ship $v$ is at route $r_0$ before planning, and it wants to transfer to route $r$ and $r'$ during \textbf{P-1}. According to Constraint \eqref{eq:route_select_4}, $y^1_{vr'}$ is 1 because $\sum_{e\in\mathcal{E}_v}y^{1,1}_{vrr'e}$ is 1. As it transfer from $r$ to $r'$, according to Constraints \eqref{eq:route_select_5} and \eqref{eq:route_select_6}, $\sum_{e\in\mathcal{E}_v}y^{1,2}_{vrr'e}$ and $\sum_{e\in\mathcal{E}_v}y^{1,2}_{vr're}$ are equal to 1, resulting in $y^1_{vr'}$ is 1. To lower its costs, the ship conducts all the transporting tasks on one route before it leaves for another one, which means it does not visit one route twice (otherwise the costs of transiting between routes increases with no benefits). In this case, the ship transferred twice during \textbf{P-1}, requiring the LHS of Constraint \eqref{eq:route_select_7} to be 2, and as the ship is on two routes during \textbf{P-1}, the RHS of Constraint \eqref{eq:route_select_7} must also be 2. Constraint \eqref{eq:route_select_7} here ensures that the ship does not visit route $r$ again after leaving it. Constraint \eqref{eq:route_select_8} and \eqref{eq:route_select_9} require that at the beginning of \textbf{P-2} (the end of \textbf{P-1}), a ship can be on route $r'$ at the beginning of \textbf{P-2} if and only if exactly one of the following two conditions is satisfied: it was transferred to $r'$ either from route $r$ at the beginning of \textbf{P-1} or from route $r''$ during \textbf{P-1}; if neither of these two conditions holds, or if both hold simultaneously, the ship must not be on route $r'$ at the beginning of \textbf{P-2}. Constraints \eqref{eq:route_select_10} to \eqref{eq:route_select_15} have the same meanings for \textbf{P-2} as Constraints \eqref{eq:route_select_3} to \eqref{eq:route_select_7} have for \textbf{P-1}. 

Constraints \eqref{eq:service-days_1} and \eqref{eq:service-days_2} ensure full utilization of ships including transporting cargoes, transiting between routes, ballast sailing, and staying at port. Constraints \eqref{eq:contract-frequency_1} and \eqref{eq:contract-frequency_2} guarantee the required frequency for every contract, while \eqref{eq:contract-demand_1} and \eqref{eq:contract-demand_2} ensure demand fulfillment. Constraints \eqref{eq:capacity_1} and \eqref{eq:capacity_2} respect the capacity limits of type $k$ on ship type $v$ on trade lane $i$, for both contractual and spot cargo. Constraints \eqref{eq:spot-cargo_1} and \eqref{eq:spot-cargo_2} restrict the spot cargo volume to the size of the spot market for each capacity type. Constraints \eqref{eq:demand_cii} (Demand-based CII) and \eqref{eq:supply_cii} (Supply-based CII) are alternative formulations and can be selectively implemented depending on the modeling needs, to ensure that the CII of all ships remains within the required boundary for each scenario $s$. Finally, constraints \eqref{eq:variable_1} to \eqref{eq:variable_5} define the domain of decision variables.

\section{Solution method}
\label{sec:algorithm}

In the model, \( R \) consists of trade lanes, and its size grows rapidly as the number of trade lanes increases. If a route can include at most \( C_{\max} \) trade lanes, the size of \( R \) becomes \( O\left(\sum_{l=1}^{C_{\max}} \binom{n}{l} (l-1)!\right) = O\left( \binom{n}{C_{\max}} (C_{\max}-1)! \right) \), where \( \binom{n}{l} \) is the number of ways to select \( l \) trade lanes from \( n \), and \( (l-1)! \) is the number of directed cycles formed by the selected \( l \) trade lanes. This rapid growth in \( R \) significantly increases the parameter space, complicating the solution process. For instance, the major oil transportation trade lanes are Middle East to Asia, Middle East to Europe, Middle East to North America, Africa to Europe, South America to North America, and Africa to Asia, totaling six in all. Therefore, these six trade lanes can have $\sum_{l=1}\binom{n}{l}(l-1)!=\sum_{l=1}^{6}\binom{6}{l}(l-1)!=415$ routes if $C_{max}=n$, which is a quite large parameter space.

However, minimizing the invalid sections and maximizing the valid sections of a route improves cost-effectiveness, as it reduces costs and carbon emissions on invalid sections, aiding both the objective function and CII compliance. Consequently, companies prefer routes with lower ballast ratios. To address computational complexity, we adopt a heuristic pre-processing and stepwise input method for \( R \).

First, the ballast ratio of each route is calculated using equation \eqref{form:ballst_ratio}, and the routes in $R$ are sorted in ascending order of ballast ratio. Routes are then added to the initial set if they include new trade lanes. For example, with three trade lanes (1, 2, 3) and routes \{1\}, \{2\}, \{3\}, \{1,2\}, \{1,3\}, \{2,3\}, \{1,2,3\}, sorting by ballast ratio yields the order \{2,3\}, \{3\}, \{2\}, \{1,2,3\}, \{1,2\}, \{1,3\}. The first route, \{2,3\}, is added to the initial set. Routes \{3\} and \{2\} are skipped as their trade lanes are already included. Route \{1,2,3\} is added as it includes a new trade lane (1). The final initial set consists of \{2,3\} and \{1,2,3\}.

{\small
\begin{align}
    \text{Ballast ratio of the route} = 1 - \frac{\text{Sum of lengths of all trade lanes in the route}}{\text{Length of the route including trade and ballast legs}} \label{form:ballst_ratio}
\end{align}
}

Next, input the initial set of routes into the solver and solve the problem. The remaining routes are sorted by ballast ratio and added iteratively to the route set, $m$ at a time, with the updated set re-entered into the solver. This process repeats until the solver's optimal solution stabilizes (change below a threshold) or a preset iteration limit is reached. The pseudocode for this algorithm is provided in Algorithm \ref{alg:algorithm}. To analyze the complexity of this algorithm, we denote the solution time of the MILP with $|R|$ routes as $T(|R|)$. Therefore, the time complexity of the algorithm can be expressed as $O\Big(|R| \log |R| + |R| \cdot N + \sum_{i=0}^K T(|R_o| + i \cdot step\_length)\Big)$,
where $|R|$ is the total number of routes, $N$ is the number of trade lanes, $K$ is the number of iterations limited by $max\_iteration\_times$, $T(s)$ is the solver's time complexity for input size $s$, and $|R_o|$ is the initial route set. Here, $O(|R| \log |R|)$ corresponds to the complexity of sorting, $O(|R| \cdot N)$ corresponds to the complexity of selecting the initial set of routes, and $O\Big(\sum_{i=0}^K T(|R_o| + i \cdot step\_length)\Big)$, corresponds to the complexity of solving the MILP iteratively with the selected route sets. Since inputting the full route set $R$ into the MILP at once is computationally expensive, the algorithm limits the number of iterations through $max\_iteration\_times$, effectively controlling computational costs. Numerical experiments show that this iterative approach is significantly faster than directly solving the full route set while achieving near-optimal solution.

\begin{algorithm}[H]
\label{alg:algorithm}
\SetAlgoLined
\caption{Heuristic Route Search Algorithm}
\KwIn{Set of routes $R$, \# of trade lanes $N$, threshold, step\_length, max\_iteration\_times}
\KwOut{Optimal decision variables and objective value}
\textbf{Step 1: Reorder the routes in set $R$ according to the ballast ratio}\;
    \For{$r \in R$}{
        Calculate the distance of all trade lane $i$ in $r$\;
        $L_r^{lane} = \sum_{i\in r} L_i$\;
        $ballast\_ratio_{r} \leftarrow 1 - \frac{L_r^{lane}}{L_r}$\;
        }
    Change the type of $R$ to list; $R_0 \leftarrow$ Sort($R$) by $ballast\_ratio$\;
\textbf{Step 2: Select the initial set of routes}\;
    Initial route set $R_1 \leftarrow \emptyset$\;
    \For{$r \in R_0$}{
        $I \leftarrow \text{All trade lane $i$ which can be served by routes in $R_1$}$ \;
        $I' \leftarrow \text{All trade lane $i$ which can be served by route $r$}$\;
        \If{$card(I)=N$}{break\;}
        \If{$I' \setminus I \neq \emptyset$}{
            $R_1 \leftarrow R_1\  \text{append $r$}$, $R_0 \leftarrow R_0\ \text{delete $r$}$\;
        }
    }
\textbf{Step 3: Enter $R_1$ into the model and perform iterations.
}\;
    $Decision\_variable\_values_0$, $objective\_value_0$ $\leftarrow$ Model($R_1$)\;
    $iteration \leftarrow 0$\;
    \For{each batch $R_s \subseteq R_0\setminus R_1$ ($card(R_s)=step\_length$)}{
        $iteration \leftarrow iteration + 1$\;
        $Decision\_variable\_values$, $objective\_value$ $\leftarrow$ Model($R_1 \cup R_s$)\;
        \If{$\| objective\_value - objective\_value_0 \| < threshold \times \| objective\_value_0 \|$ or $iteration \geq max\_iteration\_times$}{\textbf{return} $Decision\_variable\_values$, $objective\_value$\;}
        $Decision\_variable\_values_0$, $objective\_value_0$ $\leftarrow$ $Decision\_variable\_values$, $objective\_value$\;
        $R_1$ $\leftarrow$ $R_1 \cup R_s$
    }
\end{algorithm}

The proposed model and algorithm are implemented in Python using the Gurobi Optimizer 12.0.1 via the gurobipy interface and solved on a high-performance computing server (FusionServer G5500 V6) equipped with 128 CPU cores and 2 TB of RAM.

\section{Numerical analysis}
\label{sec:case_study}

This section analyzes the problem by evaluating outcomes under two CII policies, and the impact of future uncertainties using a generated instance.

\subsection{Instance generation}
\label{sec:instance}

Due to the lack of actual data, we approximate our dataset based on previous research such as \cite{wang2018planning}. We assume 15 ships, 5 trade lanes, 3 speed choices, 3 contracts, 2 capacity types, and 13 scenarios in \textbf{P-2}, as shown in Table \ref{tab:relationship}. Decisions are made at the end of April, with \textbf{P-1} spanning May to August and \textbf{P-2} from September to December.

We first generate basic parameters, including vessel availability, route lengths, speed options, contract requirements, spot market cargoes, and capacity volumes. Deterministic values for \textbf{P-1} are derived from these, e.g., $T_{vre}$ is approximated as $T_{vre} = \frac{L_r}{e}$ with added layover time. Parameter ranges are detailed in Tables \ref{table:parameters_1} and \ref{table:parameters_2}. For the CII standard, we adopt the example from \cite{IMO_MEPC76_15_Add2_2021} for a bulk ship. To compare the outcomes under different CII stringency levels, we set the lenient standard at 11.8 g/(ton·nmile), corresponding to rate D, and the stricter standard at 8.6 g/(ton·nmile), corresponding to rate A.

In \textbf{P-2}, uncertainties include fuel costs, market demand, and freight rates, linked to parameters such as $C^{RT}_{s}$ (fuel costs), $D_{cs}$ and $D^{SP}_s$ (market demand), and $R^{SP}_s$ (spot transportation revenue). These factors are interrelated, as shown in Table \ref{tab:relationship} ($\uparrow$: rising, $\downarrow$: falling, \textemdash: unchanged). Higher trade demand typically increases freight rates \citep{bai2021tanker,lun2013demand}, while rising fuel prices, a key cost component, also raise freight rates as costs are passed to customers \citep{xu2011dynamics}. However, when fuel prices and demand move in opposite directions, freight rates may vary in any direction.

\begin{table}[!ht]
    \centering
    \begin{tabular}{ccc}
        \toprule
        Fuel price & Market demand & Freight rate \\ \hline
        \textemdash & \textemdash & \textemdash \\
        \textemdash & $\uparrow$ & $\uparrow$ \\ 
        \textemdash & $\downarrow$ & $\downarrow$ \\ 
        $\uparrow$ & \textemdash & $\uparrow$ \\ 
        $\uparrow$ & $\uparrow$ & $\uparrow$ \\ 
        $\uparrow$ & $\downarrow$ & Any direction \\ 
        $\downarrow$ & \textemdash & $\downarrow$ \\ 
        $\downarrow$ & $\uparrow$ & Any direction \\ 
        $\downarrow$ & $\downarrow$ & $\downarrow$ \\ 
        \bottomrule
    \end{tabular}
    \caption{The interrelationship of changes among fuel price, market demand, and freight rate}
    \label{tab:relationship}
\end{table}

\begin{table}[!ht]
    \centering
    \begin{tabular}{cccc}
        \toprule
        Scenario & Fuel price & Market demand & Freight rate \\ \hline
        1 & \textemdash & \textemdash & \textemdash \\
        2 & \textemdash & $\uparrow$ & $\uparrow$ \\ 
        3 & \textemdash & $\downarrow$ & $\downarrow$ \\ 
        4 & $\uparrow$ & \textemdash & $\uparrow$ \\ 
        5 & $\uparrow$ & $\uparrow$ & $\uparrow$ \\ 
        6 & $\uparrow$ & $\downarrow$ & \textemdash \\
        7 & $\uparrow$ & $\downarrow$ & $\uparrow$ \\
        8 & $\uparrow$ & $\downarrow$ & $\downarrow$ \\ 
        9 & $\downarrow$ & \textemdash & $\downarrow$ \\ 
        10 & $\downarrow$ & $\uparrow$ & \textemdash \\
        11 & $\downarrow$ & $\uparrow$ & $\uparrow$ \\
        12 & $\downarrow$ & $\uparrow$ & $\downarrow$ \\ 
        13 & $\downarrow$ & $\downarrow$ & $\downarrow$ \\
        \bottomrule
    \end{tabular}
    \caption{Generated scenarios}
    \label{tab:scenarios_0}
\end{table}

We define each scenario qualitatively (Table~\ref{tab:scenarios_0}) and then quantify changes as follows. To ease computation while maintaining representativeness, we do not generate variables from full probability distributions, which would require a large number of scenarios to adequately capture the variability of each parameter. Instead, we approximate these distributions by selecting a representative scale factor for each direction of change. For example, parameters expected to remain unchanged are kept at a selected value in \textbf{P-2}, those expected to increase are scaled to 120\% of the value, those expected to decrease are scaled to 80\% of the value. This approach yields a representative set of future scenarios that balances realism and computational tractability. A more detailed explanation and sensitivity analysis regarding the number of scenarios is provided in Appendix~\ref{appendix:scenarios}. In practice, shipping companies can first identify key uncertainty sources (e.g., fuel price, market demand, freight rate) and discretize them into states (e.g., rising, stable, falling) based on historical data and expert judgment. After establishing economically plausible interrelationships (Table~\ref{tab:relationship}) to filter out implausible combinations, quantitative values are assigned to each state and probabilities are assigned to each scenario, using historical distributions or managerial forecasts. The number of scenarios can be chosen by the shipping company depending on the richness of their historical data. In our example, this is simplified by applying scaling factors of 120\%, 100\%, and 80\% to the rising, stable, and falling states in \textbf{P-1} to generate the corresponding values in \textbf{P-2}, with each scenario assigned an equal probability.

\begin{table}[htbp]
\centering
\caption{The value/range of the deterministic parameters in the case.}
\label{table:parameters_1}
\begin{tabular}{@{}lp{10cm}p{4.3cm}}
\toprule
Parameters & & Value/range \\ \midrule
$M_{v}^{1}$ & No. of available days for ship $v$ during \textbf{P-1}. & 120[days] \\
$M_{v}^{2}$ & No. of available days for a ship of type $v$ during \textbf{P-2}. & 120[days] \\
$Q_{vk}$ & Volume of capacity type k installed on ship $v$. & 10,331-49,944[t]\\
$T_{vre}$ & Total travel time for ship type $v$ to complete a round route $r$ with speed alternative $e$, including sailing time and time spent at ports, etc. & 32-175[days] \\
$F_c^{1}, F_c^{2}$& Frequency requirement of contract$ c$ in \textbf{P-1}. Frequency requirement of contract $c$ in \textbf{P-2}. & 3 or 6 \\
$D_c$ & Demand of contract $c$ in \textbf{P-1}. & 144.87-176.15K[t] \\
$D_{ik}^{SP}$ & Volume of spot cargo available on trade lane $i$ compatible with capacity type $k$ in \textbf{P-1}. & 15.22-24.62K[t] \\
$C_{vre}^{RT}$ & Cost for ship $v$ to complete a round route $r$ with speed alternative $e$ in \textbf{P-1}, including fuel cost, port fees, canal tolls, etc. & \$0.21-3.31M \\
$C_{vr_0re}^{TF}$ & Cost for ship $v$ to transfer route from $r_0$ to $r$ with speed alternative $e$ in \textbf{P-1}, including fuel cost, port fees, canal tolls, etc. & \$0.24-4.88M \\
$C_v^{ballast}$ & Cost for ship $v$ to ballast sail per day during \textbf{P-1}. & \$8.48-27.82K \\
$C_v^{port}$ & Cost for ship $v$ at port per day during \textbf{P-1}. & \$0.85-2.78K\\
$R_{ik}^{SP}$ & Revenue of delivering one ton of spot cargo with capacity type $k$ on trade lane $i$ in \textbf{P-1}. & \$49-120 \\
$E_{vre}$ & Carbon emission of ship $v$ to complete a round trip on route $r$ with speed alternative $e$. & 1.64-25.92B[g] \\ 
$E_{vr_0re}$ & Carbon emission of ship $v$ to complete a transfer trip from route $r_0$ to route $r$ with speed $e$. & 1.86-38.14B[g] \\
$E_v^{ballast}$ & Carbon emission of ship $v$ ballast sailing per day. & 36.75-122.12M[g] \\
$E_v^{port}$ & Carbon emission of ship $v$ at port per day. & 3.67-12.21M[g]\\
$E_v^0$ & Carbon emission of ship $v$ before planning. & 8.25-28.64B[g]\\
$CII_v$ & The Carbon Intensity Indicator standard for ship $v$. & lenient: 11.8[g/(t$\cdot$nmile)]; stricter: 8.6[g/(t$\cdot$nmile)]\\
$W_v^0$ & Working intensity (laden sailing distance $\times$ corresponding loads) of ship $v$ before planning. & 644.78-2,086.43M[t$\cdot$ nmile]\\
$L_i$ & Laden sailing distance of trade lane $i$. & 6,144-9,904[nmile] \\
$L_v^{ballast}$ & Distance of ship $v$ per day when ballast sailing (the distance of all ships are the same as they prefer ballast sailing in the lowest speed). & 288[nmile] \\
$L_r^{route}$ & Distance of route $r$. & 12.29-49.64K[nmile] \\
$L_{rr'}$ & Distance of switching from route $r$ to $r'$. & 1,004-5,990[nmile] \\
$y_{vr_0}^{1,0}$ &0-1 variable, whether ship $v$ on route $r_0$ before planning. & 0 or 1 \\
\bottomrule
\end{tabular}
\begin{tablenotes}
    \item Note: \$ for USD, nmile for nautical miles, t for tons, kg for kilograms, g for grams, B for billions, M for millions, and K for thousands.
 \end{tablenotes}
\end{table}

\begin{table}[htbp]
\centering
\caption{The value/range of the stochastic parameters in the case.}
\label{table:parameters_2}
\begin{tabular}{@{}lp{11.5cm}l@{}}
\toprule
Parameters & & Value/range \\ \midrule
$p_s$ & The probability of scenarios $s$ taking place during \textbf{P-2}. & $\frac{1}{13}$ \\
$D_{cs}$ & Stochastic demand of contract $c$ during \textbf{P-2}. & 115.90-211.38K[t] \\
$D_{iks}^{SP}$ & Volume of spot cargo available on trade lane $i$ compatible with capacity type $k$ during \textbf{P-2} in scenario $s$. & 12.18-29.54K[t] \\
$C_{vr1es}^{RT}$ & Cost for ship $v$ to complete a round trip on route $r_1$ with speed alternative $e$ during \textbf{P-2} in scenario $s$. & \$0.17-3.98M \\
$C_{vrr_1es}^{TF}$ & Cost for ship $v$ to transfer route from $r$ to $r_1$ with speed alternative $e$ during \textbf{P-2} in scenario $s$. & \$0.19-5.85K \\
$C_{vs}^{ballast}$ & Cost for ship $v$ to ballast sail per day during \textbf{P-2} in scenario $s$. & \$6.79-33.38K \\
$C_{vs}^{port}$ & Cost for ship $v$ at port per day during \textbf{P-2} in scenario $s$. & \$0.68-3.34K \\
$R_{iks}^{SP}$ & Revenue of delivering one ton of spot cargo with capacity type $k$ on trade lane $i$ during \textbf{P-2} in scenario $s$. & \$39-144 \\
\bottomrule
\end{tabular}
\begin{tablenotes}
    \item Note: \$ for USD, t for tons, kg for kilograms, M for millions, and K for thousands.
 \end{tablenotes}
\end{table}

\subsection{Algorithm performance}

To evaluate the effectiveness of our Algorithm~\ref{alg:algorithm} in Section~\ref{sec:algorithm}, we generated multiple parameter sets for trade lane quantities $N$ of 2, 3, 4, and 5. Effectiveness is assessed by comparing average objective function values and running times between our algorithm and exact solutions from commercial solvers. The $threshold$ in Algorithm \ref{alg:algorithm} is set at 5\%, $max\_iteration\_times$ at 3, and $step\_length$ at $\frac{3}{2}N$ with 10 parameter sets generated and tested per trade lane quantity. The parameters are chosen through experiments on small instances to ensure the quality of solutions and efficiency. To simplify computation and expedite results, we reduced the number of ships to 8 and contracts to 2. As shown in Figure \ref{fig:alg_2_5}, the running time difference increases with the number of trade lanes, while the error in objective function values remains small. The algorithm performs better with larger parameter spaces (e.g., four or five trade lanes) than with smaller ones.

\begin{figure}[!h]
    \begin{subfigure}[b]{0.5\textwidth}
        \centering
        \includegraphics[width=\linewidth]{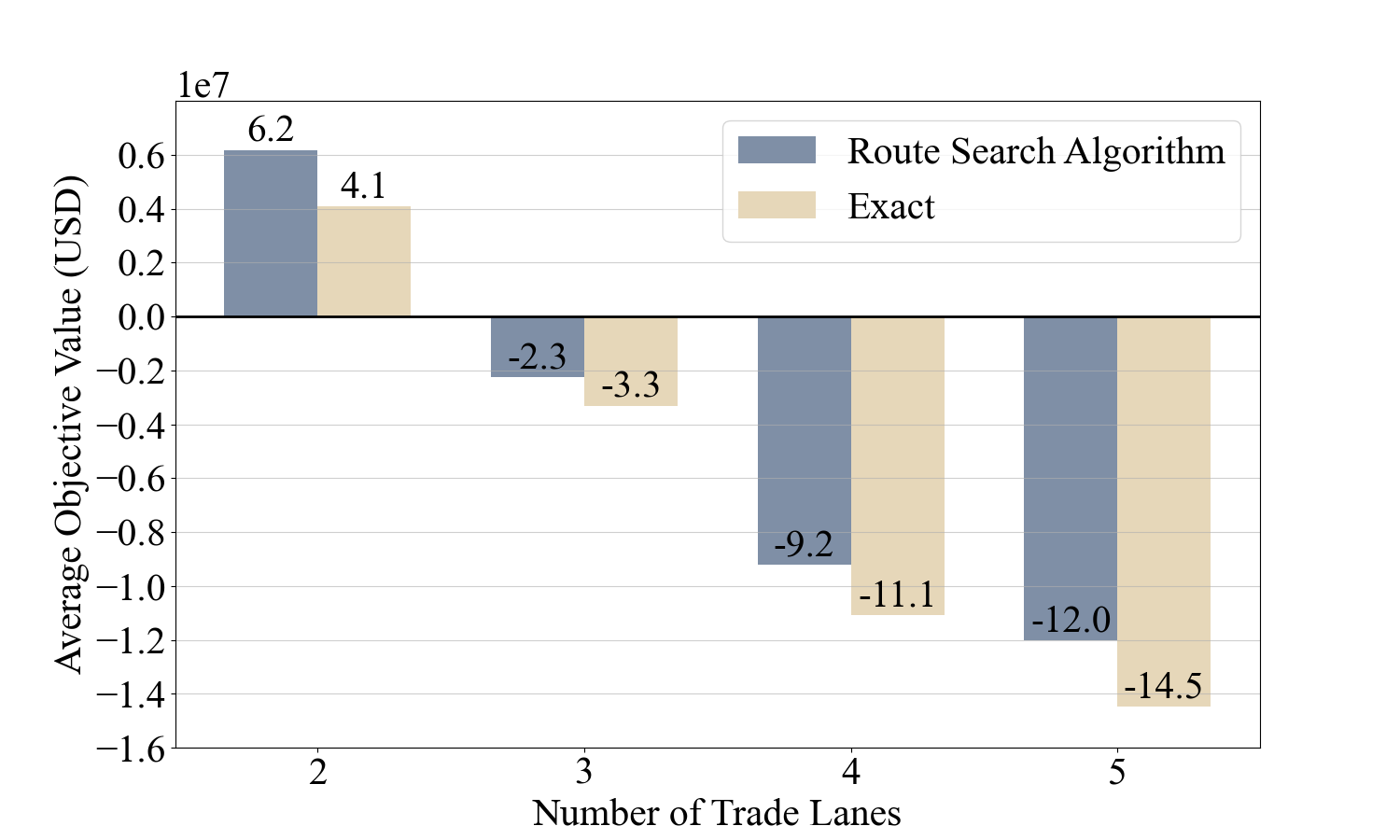}
        \caption{Average objective values ($N$=2, 3, 4 and 5)}
        \label{fig:alg_2_5_obj}
    \end{subfigure}%
    \hfill 
    \begin{subfigure}[b]{0.5\textwidth}
        \centering
        \includegraphics[width=\linewidth]{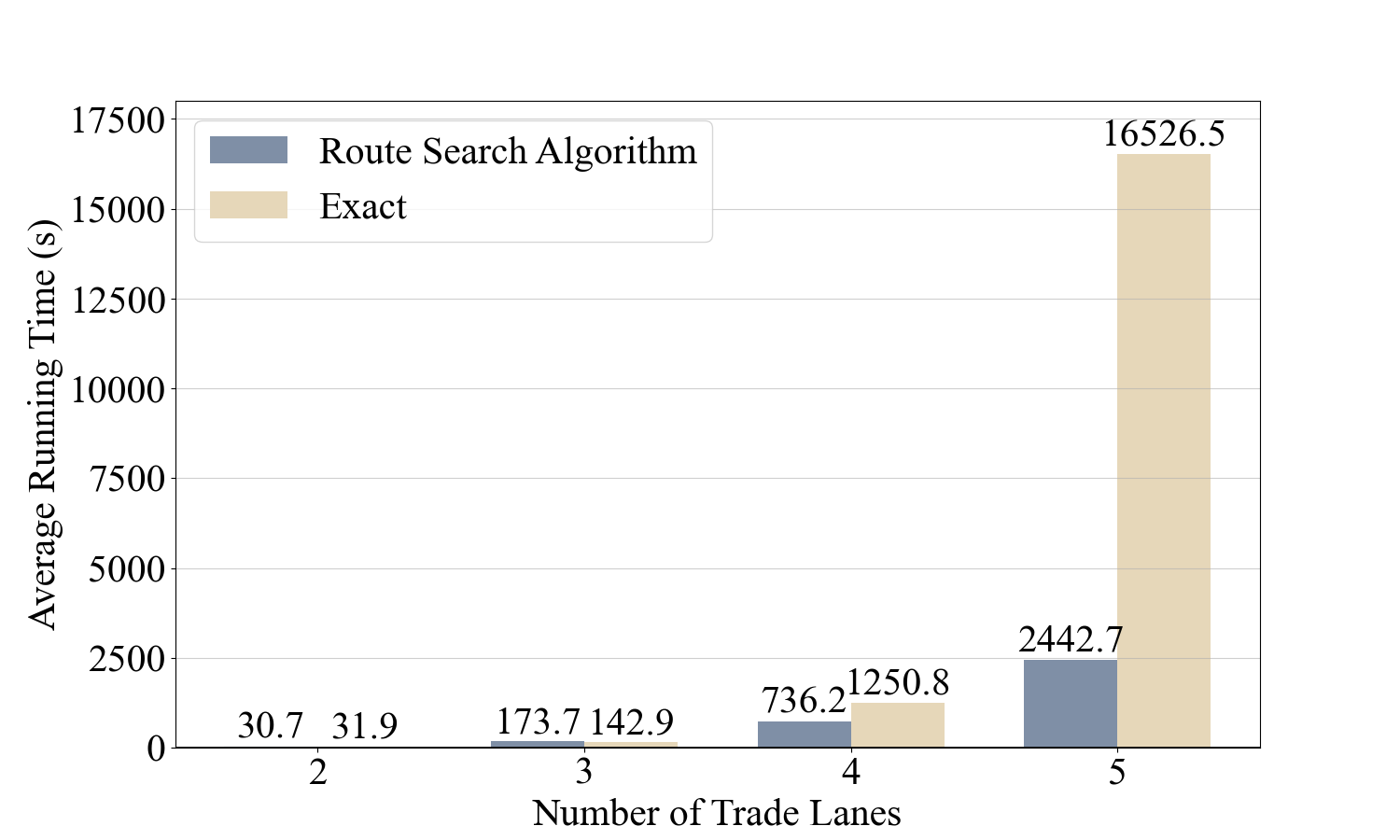}
        \caption{Average running time ($N$=2, 3, 4 and 5)}
        \label{fig:alg_2_5_time}
    \end{subfigure}
    \caption{Difference of (a) average objective values and (b) running time of two solution methods under different trade lane quantities.}
    \label{fig:alg_2_5}
\end{figure}

It is obvious that the above result shows that our algorithm outperforms the exact approach using Gurobi Optimizer in terms of computational time. To demonstrate that the objective values obtained from our heuristic algorithm are sufficiently close to those from the exact method, we note that the original objective function is defined as $(\text{cost} - \text{revenue})$. Since this value is relatively small compared to the magnitude of the cost and revenue terms themselves, directly comparing absolute differences in the objective function can lead to numerically unstable or misleading conclusions, where even small deviations result in disproportionately large relative errors.  For example, consider a case where the exact solution yields a cost of $\$1.01 \times 10^8$ and a revenue of $\$1.00 \times 10^8$, resulting in an objective value of $\$1 \times 10^6$. If the heuristic solution yields a slightly lower cost of $\$1.00 \times 10^8$ while the revenue remains unchanged, the resulting objective becomes 0. Although the heuristic only introduces a 1\% error in the cost term, a direct comparison of the objective values would suggest a 100\% error:
$\frac{|\text{Objective}^{(heuristic)} - \text{Objective}^{(exact)}|}{|\text{Objective}^{(exact)}|} = \frac{|0 - 1 \times 10^6|}{1 \times 10^6} \times 100\% = 100\%$. This clearly overstates the actual deviation introduced by the heuristic and motivates the need for a more robust, normalized comparison metric. To address this, we introduce a normalized deviation metric that evaluates the closeness between the heuristic and exact solutions in a scale-invariant manner: $$\text{Normalized Deviation} = \frac{|\text{cost}^{(heuristic)} - \text{cost}^{(exact)}| + |\text{revenue}^{(heuristic)} - \text{revenue}^{(exact)}|}{\text{cost}^{(exact)} + \text{revenue}^{(exact)}}.$$ 
We visualize the distribution of normalized deviations across different trade lane scales using boxplots, where each box summarizes the deviations over 10 test instances as mentioned above for the corresponding scale in Figure~\ref{fig:boxplot_deviation}. As shown, in all cases the deviations are consistently concentrated around 5\%. This suggests that the deviations produced by the heuristic algorithm are not significantly greater than 5\%, confirming that the solution quality is statistically indistinguishable from the exact method within a 5\% margin.

\begin{figure}[!h]
    \centering
    \includegraphics[width=0.5\linewidth]{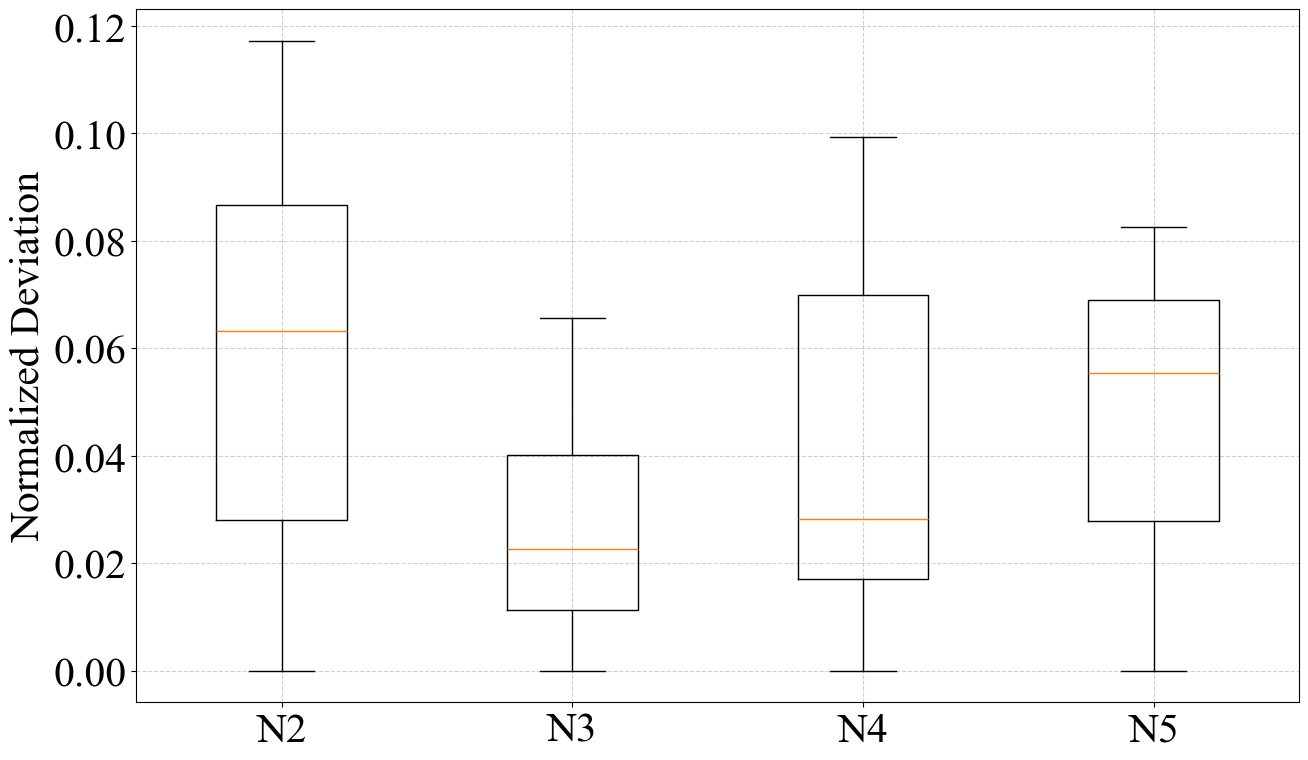}
    \caption{Normalized deviation between heuristic route search algorithm and exact solver ($N$=2, 3, 4 and 5)}
    \label{fig:boxplot_deviation}
\end{figure}

To formally assess whether the deviations are statistically below an acceptable threshold, we conduct one-sided Wilcoxon signed-rank tests for each trade lane scale, with 10 results obtained from 10 datasets for each scale. The hypotheses are defined as follows:
$$H_0: \text{Median deviation} \leq 0.05, \quad \quad H_1: \text{Median deviation} > 0.05.$$
The results are summarized in Table~\ref{tab:wilcoxon_test}. Across different trade lane scales, the $p$-values exceed 0.05, and thus we fail to reject the null hypothesis. This suggests that the deviations from the heuristic algorithm are not significantly greater than 5\%, confirming that the solution quality is statistically indistinguishable from the exact method within a 5\% margin.

\begin{table}[htbp]
    \centering
    \begin{tabular}{lcc}
        \toprule
        $N$ & Test Statistic & $p$-value \\
        \midrule
        2 & 30 & 0.4229 \\
        3 & 5  & 0.9932 \\
        4 & 22 & 0.7217 \\
        5 & 28 & 0.5000 \\
        \bottomrule
    \end{tabular}
    \caption{One-sided Wilcoxon signed-rank test results for normalized deviation against 5\% threshold}
    \label{tab:wilcoxon_test}
\end{table}

To demonstrate the underlying principle behind the high efficiency of our algorithm, we select a parameter set with five trade lanes and compare the objective values and ship routing decisions between the two methods. This comparison highlights the effectiveness of our algorithm in solving large-scale problems. The selected data include 6 ships, 5 trade lanes formulating 41 feasible routes, 3 speed choices, 3 contracts, 2 capacity types, and 13 scenarios. As described in Section \ref{sec:algorithm}, routes with lower ballast ratios typically yield better results. Using the input method, we filter the initial route set and reorder remaining routes by ballast ratio, appending them sequentially. Figure \ref{fig:alg_5} shows the variation in objective values and running time as the number of routes increases. The algorithm stops at 25 routes, where the relative change in objective values from 20 to 25 routes is under 1\%.

\begin{figure}[!h]
    \begin{subfigure}[b]{0.47\textwidth}
        \centering
        \includegraphics[width=\linewidth]{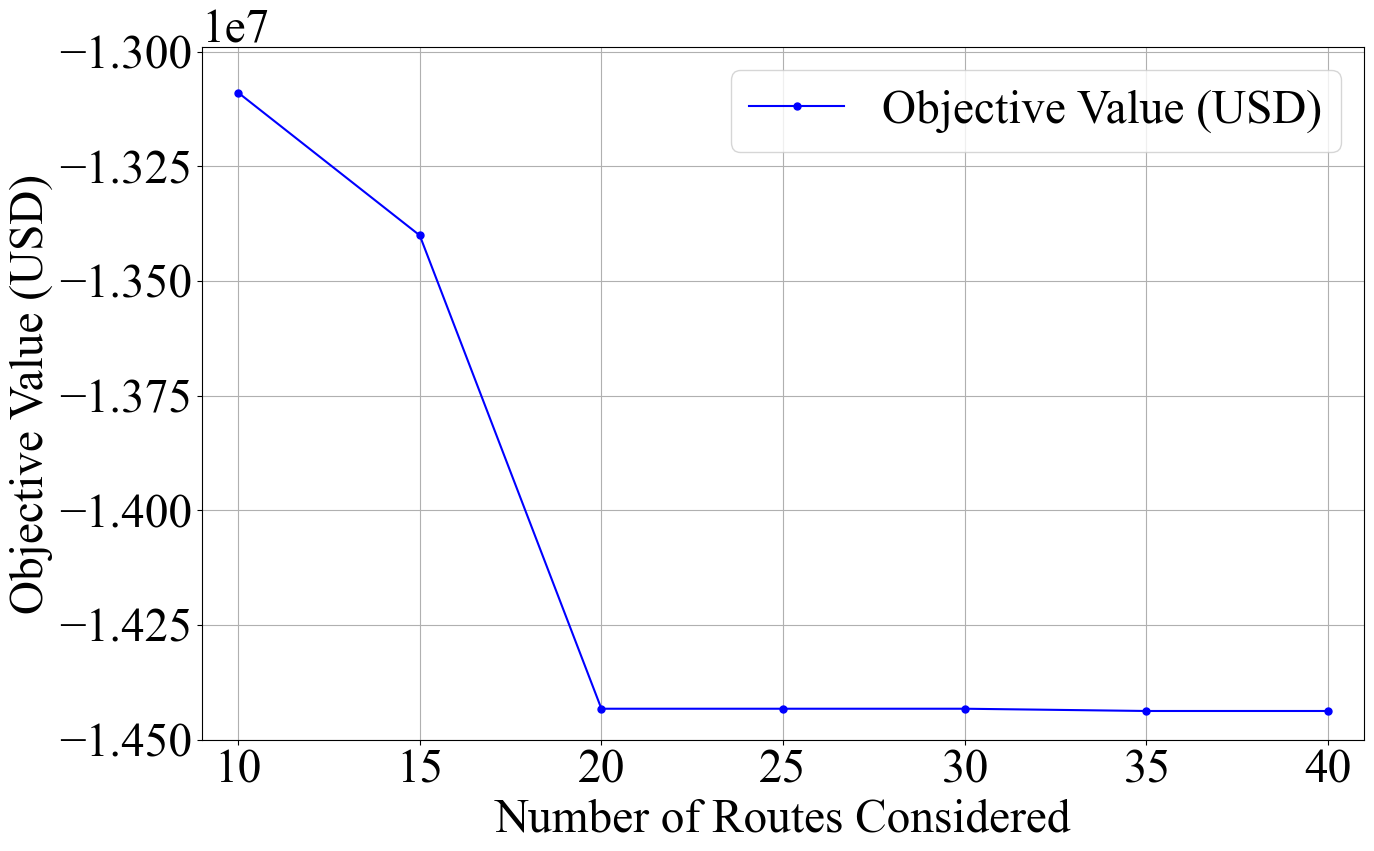}
        \caption{Objective values ($N$=5)}
        \label{fig:alg_5_obj}
    \end{subfigure}%
    \hfill 
    \begin{subfigure}[b]{0.53\textwidth}
        \centering
        \includegraphics[width=\linewidth]{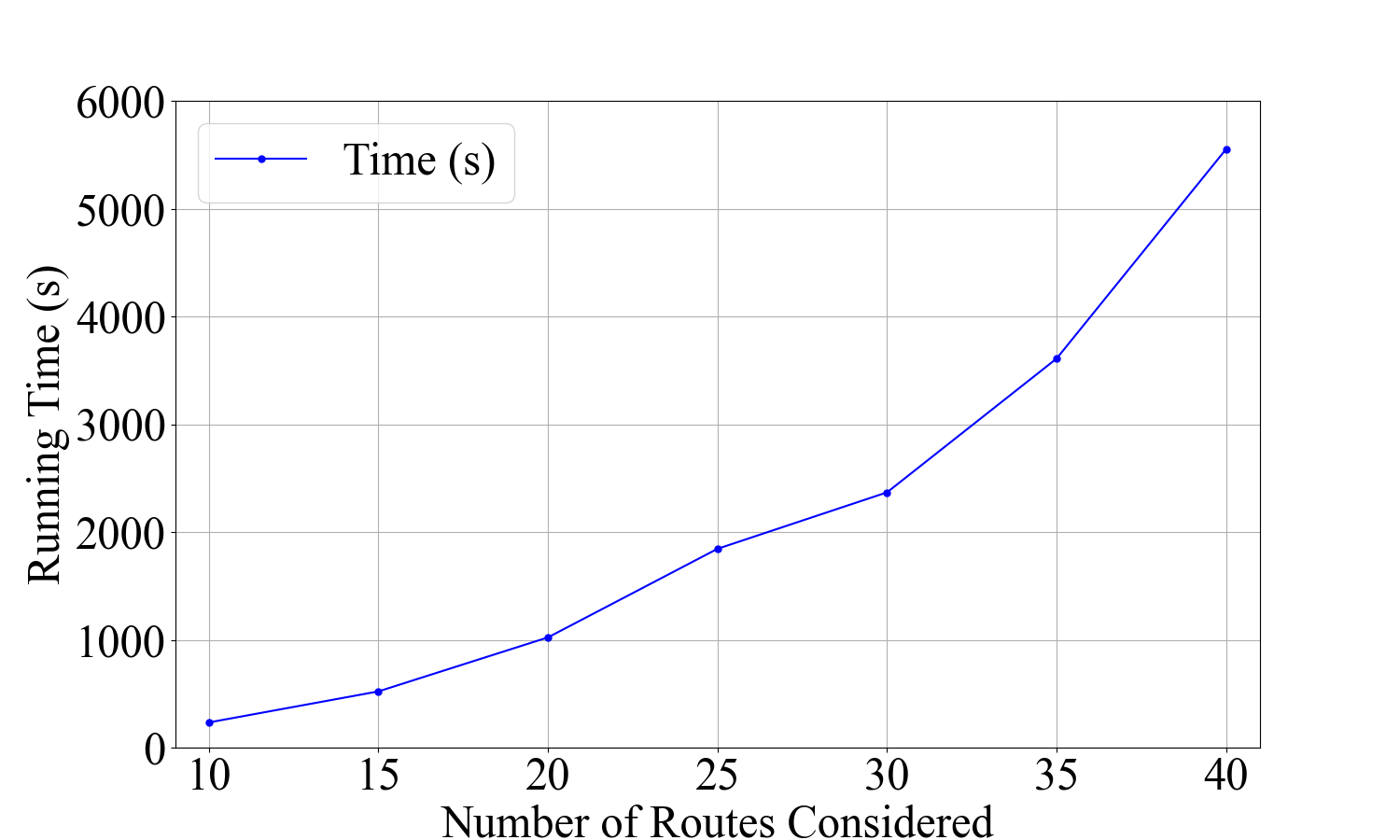}
        \caption{Running time ($N$=5)}
        \label{fig:alg_5_time}
    \end{subfigure}
    \caption{Changes of (a) objective values and (b) running time with quantities of routes considered (5 trade lanes).}
    \label{fig:alg_5}
\end{figure}

The minimal change in the objective function as the number of routes increases has specific causes. Since the algorithm sequentially incorporates routes based on their ballast ratios, ships prioritize earlier-listed routes. In the experiment shown in Figure \ref{fig:alg_5}, the route labels (deployment decisions) selected by all ships in \textbf{P-1} are displayed in Figure \ref{fig:alg_decision}. While adding more routes expands the route space, ships still prefer earlier routes, limiting the impact on solution quality. This experiment numerically verifies the algorithm's rationale and logic.

\begin{figure}[!h]
    \centering
    \includegraphics[width=0.75\linewidth]{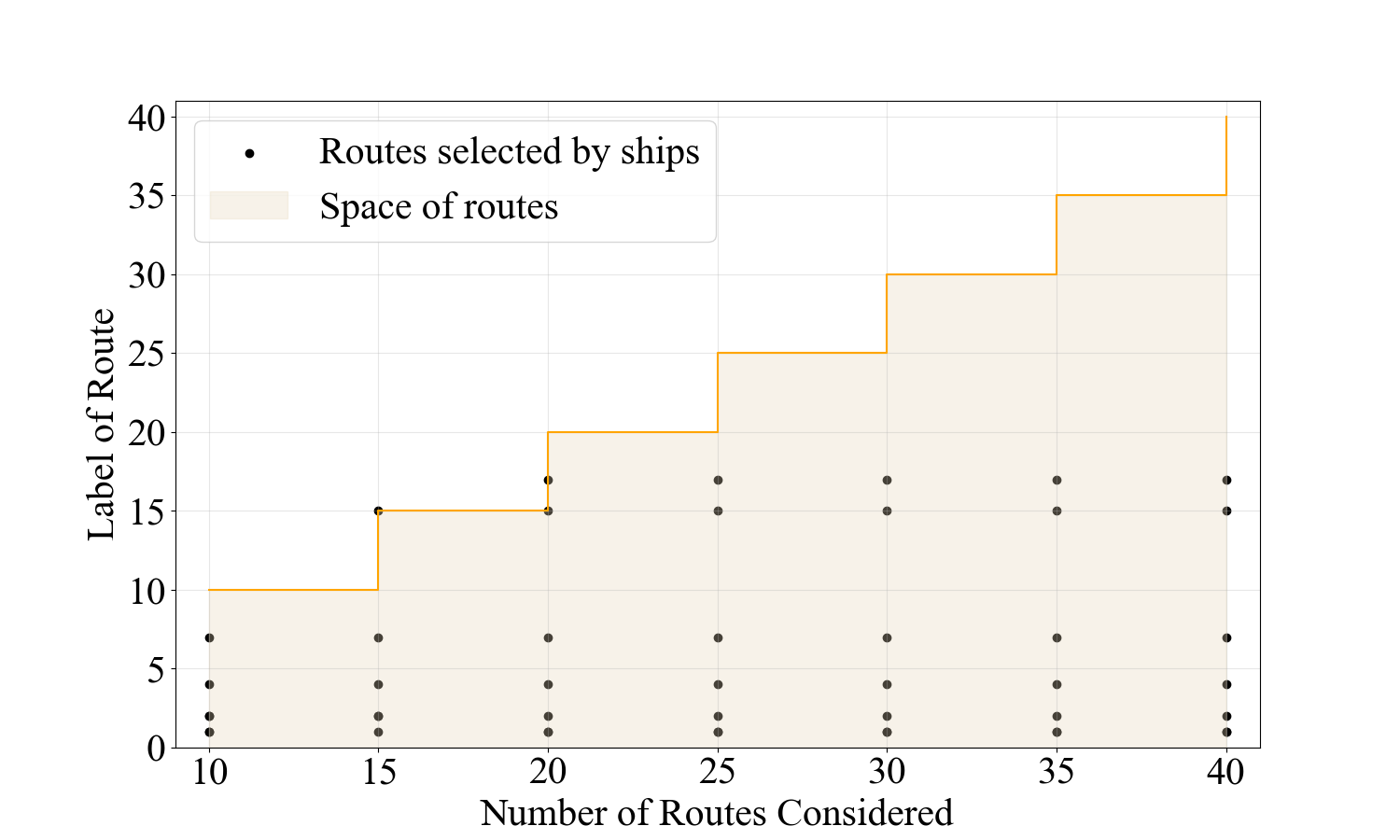}
    \caption{Labels of routes selected by ships during \textbf{P-1} under different quantities of routes considered into the model (5 trade lanes).}
    \label{fig:alg_decision}
\end{figure}

\subsection{Results under two CII forms}
\label{sec:compare_cii}

In Sections \ref{sec:compare_cii} and \ref{sec:ana_scenario}, we use a same randomly generated dataset to conduct case studies. The data include 15 ships, 5 trade lanes formulating 41 feasible routes, 3 speed choices, 3 contracts, 2 capacity types, and 13 scenarios. Here, we compare results under Demand-based CII and Supply-based CII constraints.

In this section, we examine two CII standards: one allowing feasible solutions under both forms, and another causing infeasibility under Demand-based CII but feasible under Supply-based CII by adjusting ballast segments outside transportation tasks, which leads to the Supply-based CII paradox. We analyze results for three scenarios: lenient Demand-based and Supply-based CII, and stricter Supply-based CII.

First, five tramp operation indicators are introduced in Table \ref{tab:tramp_indicators}. Then, CargoQuant, BallastRatio, AvgSpeed, and CO2Emission values during \textbf{P-1} are compared across the three scenarios in Figure \ref{fig:comparison_indicators}.

\begin{table}[!h]
    \centering
    \begin{tabular}{cccc}
    \toprule
    \textbf{Indicator Type} & \textbf{Indicator} & \textbf{Meaning} & \textbf{Unit} \\ \toprule
    Cargo Selection & CargoQuant & Total quantities of served cargoes in spot market & ton \\ \hline
    Idle Option & BallastRatio & The proportion of idle time spent on ballast sailing & \% \\ \hline
    Speed Optimization & AvgSpeed & Average sailing speed in the year & nmile/h \\ \hline
    \multirow{2}{*}{Ultimate Objectives} & CO2Emission & Total CO2 emissions & kg \\ \cline{2-4}
     & TotalProfit & Total profits & USD \\ 
    \bottomrule
    \end{tabular}
    \caption{Indicators of tramp operations under}
    \label{tab:tramp_indicators}
\end{table}

\begin{figure}[!h]
    \begin{subfigure}[b]{0.5\textwidth}
        \centering
        \includegraphics[width=\linewidth]{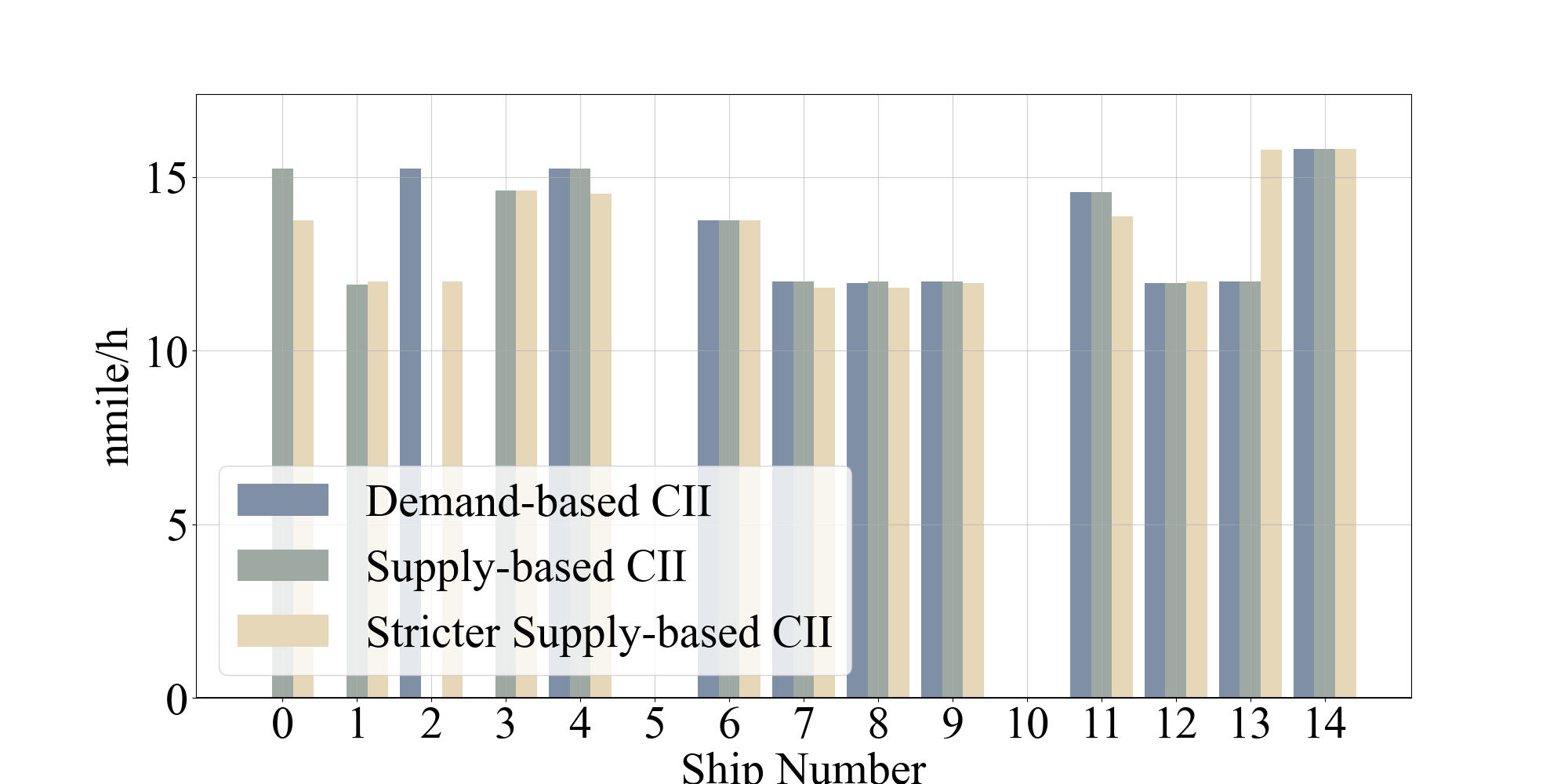}
        \caption{AvgSpeed in \textbf{P-1} vs. Ship number}
        \label{fig:compare_speed}
    \end{subfigure}%
    \hfill
    \begin{subfigure}[b]{0.5\textwidth}
        \centering
        \includegraphics[width=\linewidth]{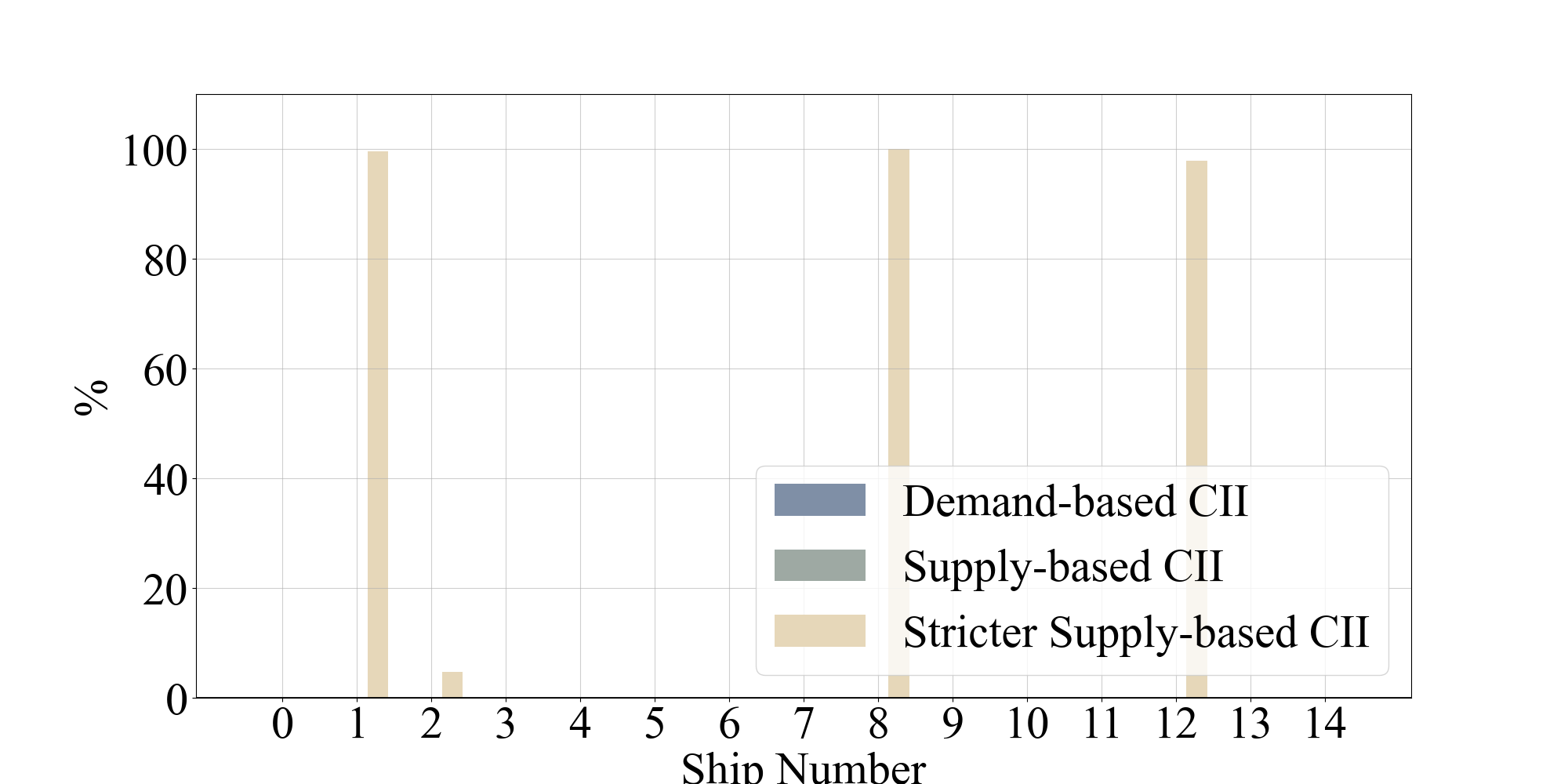}
        \caption{BallastRatio in \textbf{P-1} vs. Ship number}
        \label{fig:compare_ballast}
    \end{subfigure}
    \hfill
    \begin{subfigure}[b]{0.5\textwidth}
        \centering
        \includegraphics[width=\linewidth]{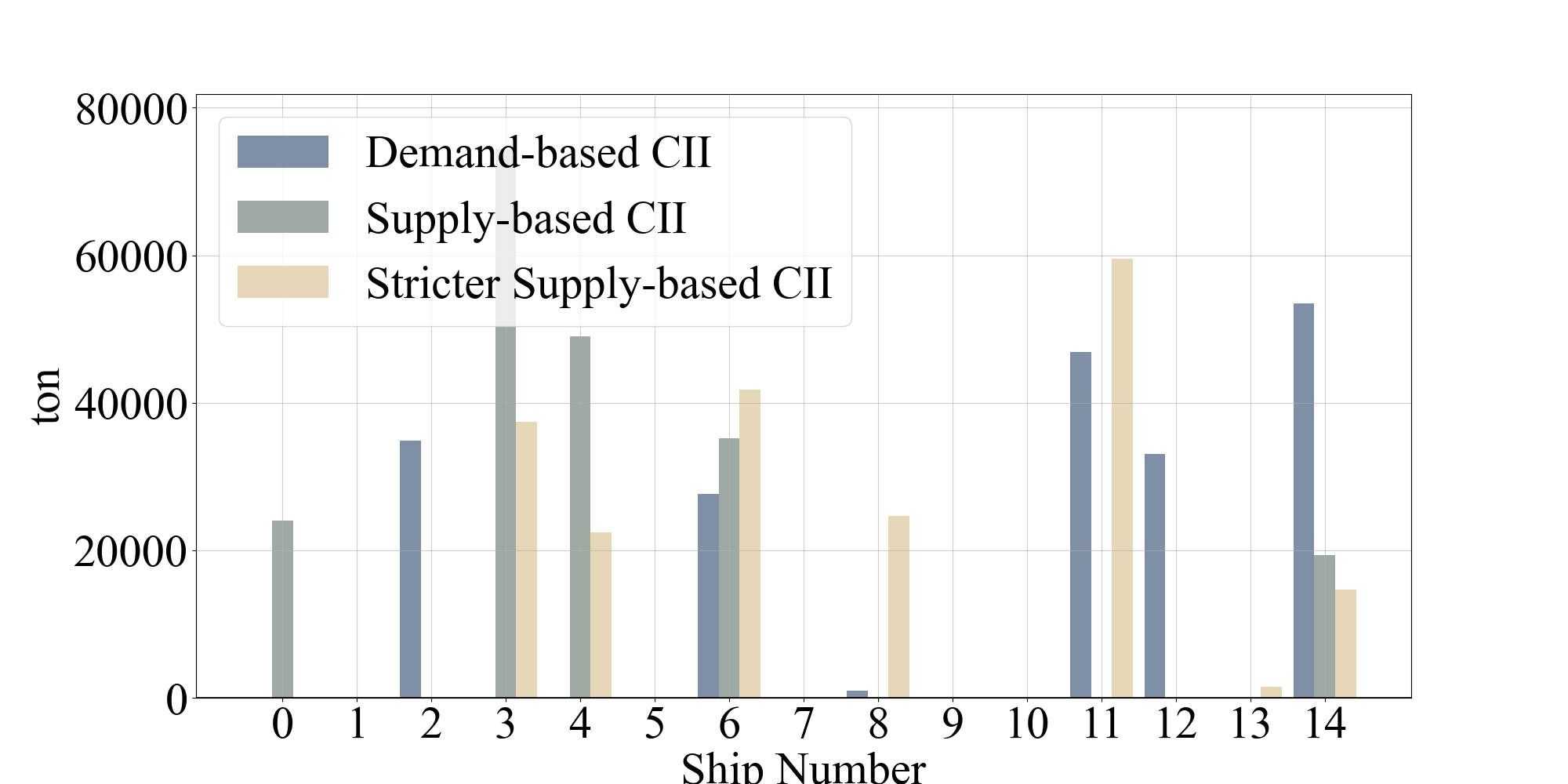}
        \caption{CargoQuant in \textbf{P-1} vs. Ship number}
        \label{fig:compare_load}
    \end{subfigure}
    \hfill
    \begin{subfigure}[b]{0.5\textwidth}
        \centering
        \includegraphics[width=\linewidth]{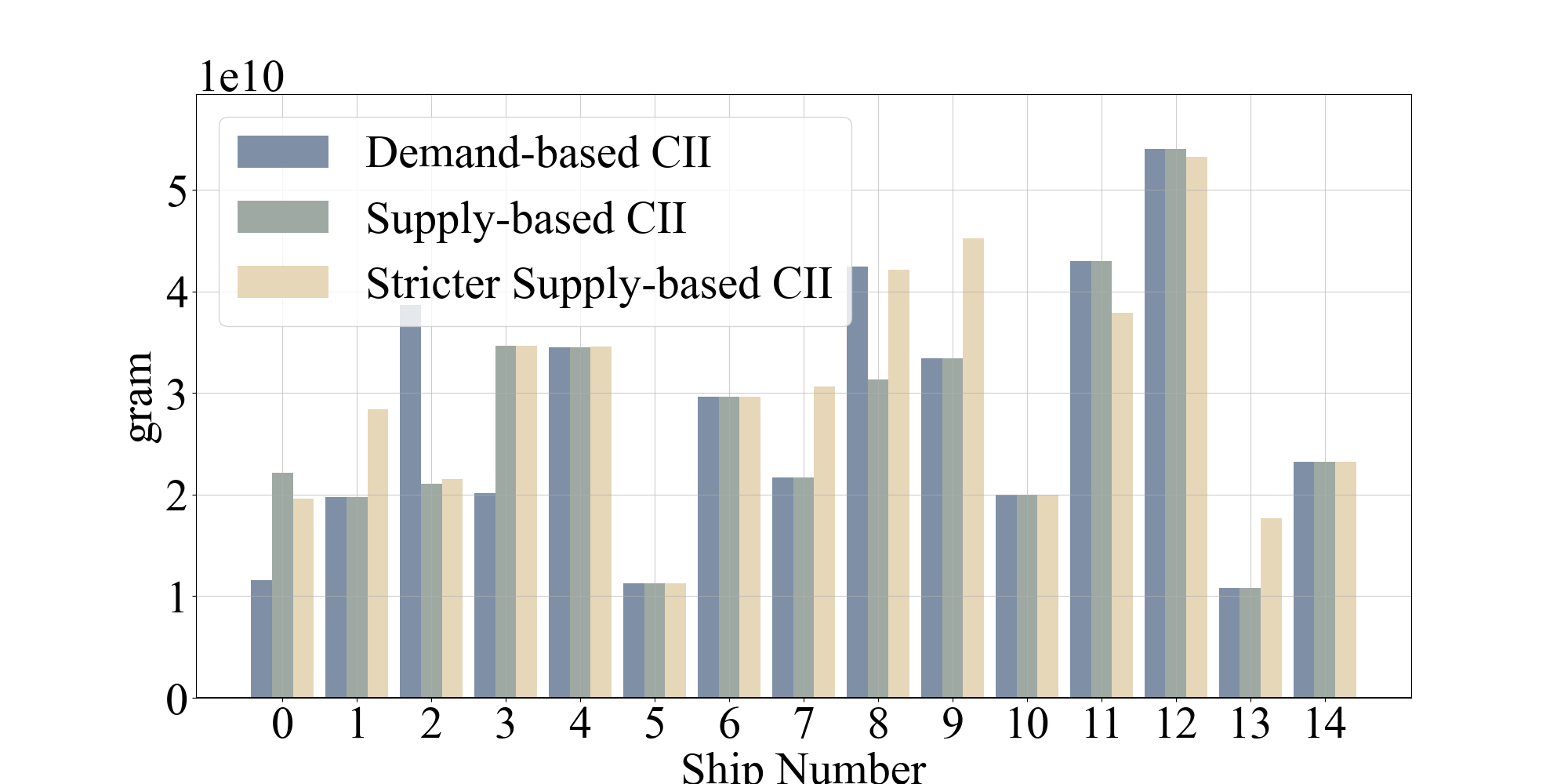}
        \caption{CO2Emission in \textbf{P-1} vs. Ship number}
        \label{fig:compare_carbon}
    \end{subfigure}
    \caption{Comparison of Indicators under Three CII Standards}
    \label{fig:comparison_indicators}
    \caption*{\raggedright \setstretch{1}Note: \textbf{P-1} denotes the first planning stage. 
    \textit{AvgSpeed} represents the average sailing speed during the year; 
    \textit{BallastRatio} is the proportion of idle time spent on ballast sailing; 
    \textit{CargoQuant} indicates the total quantity of cargoes served in the spot market; 
    and \textit{CO2Emission} denotes total carbon dioxide emissions.}
\end{figure}

As shown in Figure~\ref{fig:compare_speed}, ship speeds under the lenient Demand-based and Supply-based CII are almost identical, except for a few ships that remain at port with zero speed. However, under the stricter Supply-based CII, several ships (e.g., Ship~0, 2, 4, and~11) deliberately reduce their sailing speeds to lower fuel consumption and emissions. In contrast, under the stricter Supply-based CII, some of them (e.g., Ship~1, 2, 8, and~12) undertake additional ballast sailings to satisfy the tightened carbon intensity requirements, as illustrated in Figure~\ref{fig:compare_ballast}.

Figure~\ref{fig:compare_load} illustrates the cargo volume transported by each vessel under different CII regulatory schemes. While the total cargo volume transported by all ships under each individual CII setting remains constant at $2.02 \times 10^5$ tons, the distribution of loads among individual vessels varies significantly depending on the type of CII applied. Under Demand-based CII, only laden sailing legs contribute to the denominator, prompting more careful adjustments in cargo allocation and route selection to ensure compliance. In contrast, Supply-based CII is computed based on each ship's DWT and total sailing distance, often resulting in a different cargo distribution pattern. The fact that the total transported cargo remains unchanged across all CII schemes is due to the design of our instance, in which the total fleet capacity exceeds the total market demand, allowing all available cargo to be served regardless of the CII policy.

As shown in Figure \ref{fig:compare_carbon}, ships tend to emit more carbon under stricter Supply-based CII requirements. Total emissions are approximately $9 \times 10^{11} \, \text{g}$ under the lenient Demand-based and Supply-based CII, but increase to $1.0 \times 10^{12} \, \text{g}$ when the Supply-based CII becomes stricter, as illustrated in Figure \ref{fig:comparison_of_carbon_emissions_under_CII}. This increase results from additional ballast sailings undertaken to satisfy the tighter requirements, which in turn lead to higher emissions.

\begin{figure}[!ht]
    \centering
    \includegraphics[width=0.7\linewidth]{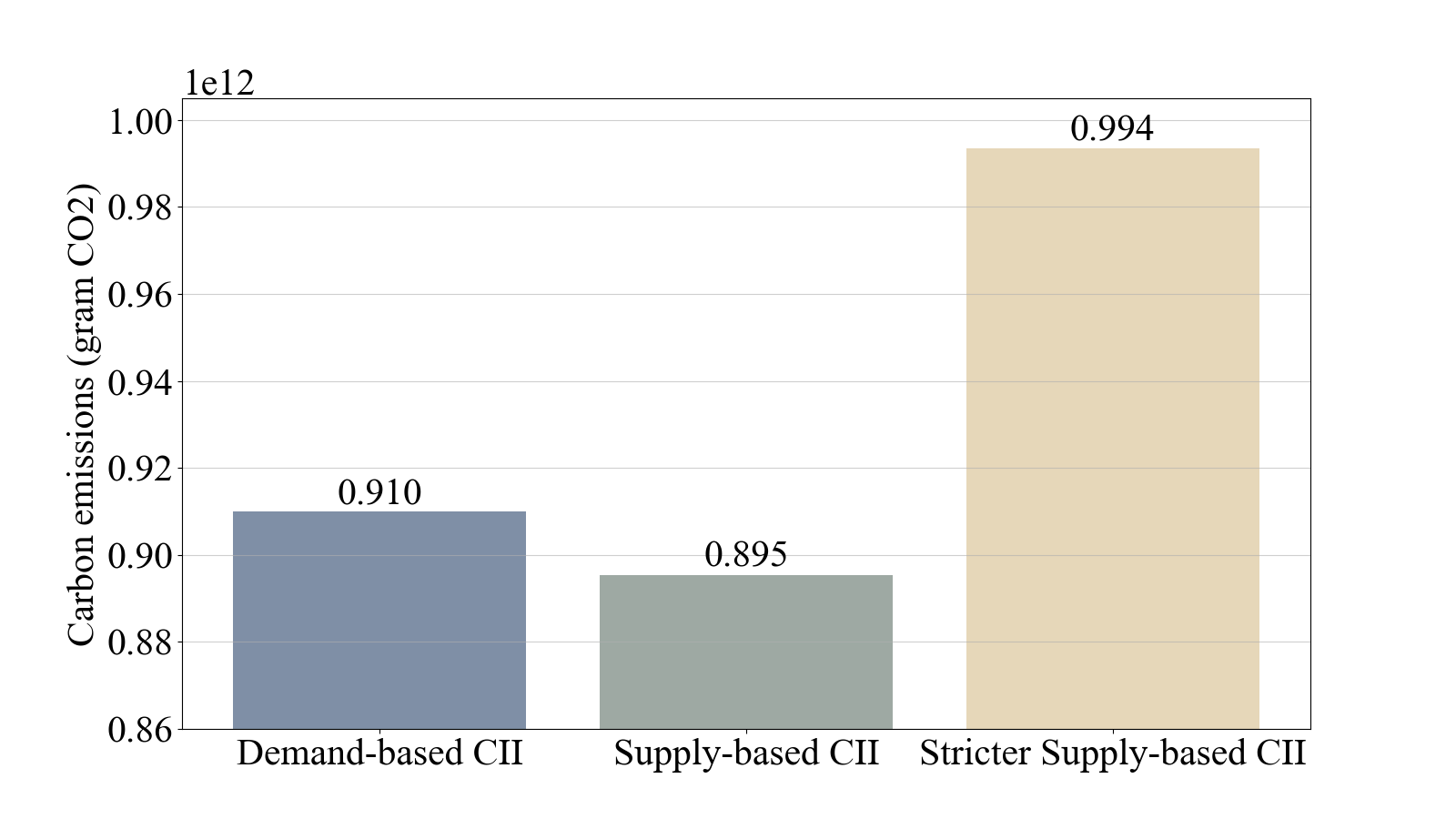}
    \caption{Total Carbon Emission under Different CII Forms}
    \label{fig:comparison_of_carbon_emissions_under_CII}
    \caption*{\raggedright \setstretch{1}
    Note: ``Demand-based CII" refers to the Demand-based Carbon Intensity Indicator calculated with a D rating, 
    ``Supply-based CII" refers to the Supply-based Carbon Intensity Indicator with a D rating, 
    and ``Stricter Supply-based CII" corresponds to the Supply-based Carbon Intensity Indicator with an A rating.}
\end{figure}

The opposite value of the objective function representing profits, shows that under the lenient CII standards, profits about $\$4.7 \times 10^6$ (Demand-based CII) and $\$6.6 \times 10^6$ (Supply-based CII), while under the stricter Supply-based CII, they drop to $\$-2.1 \times 10^6$, as shown in Figure \ref{fig:comparison_of_profits_under_CII}. This decline is partly due to excessive ballast sailing, which is more costly than remaining stationary, as cargo delivery and revenue remain consistent across scenarios.

\begin{figure}[!ht]
    \centering
    \includegraphics[width=0.7\linewidth]{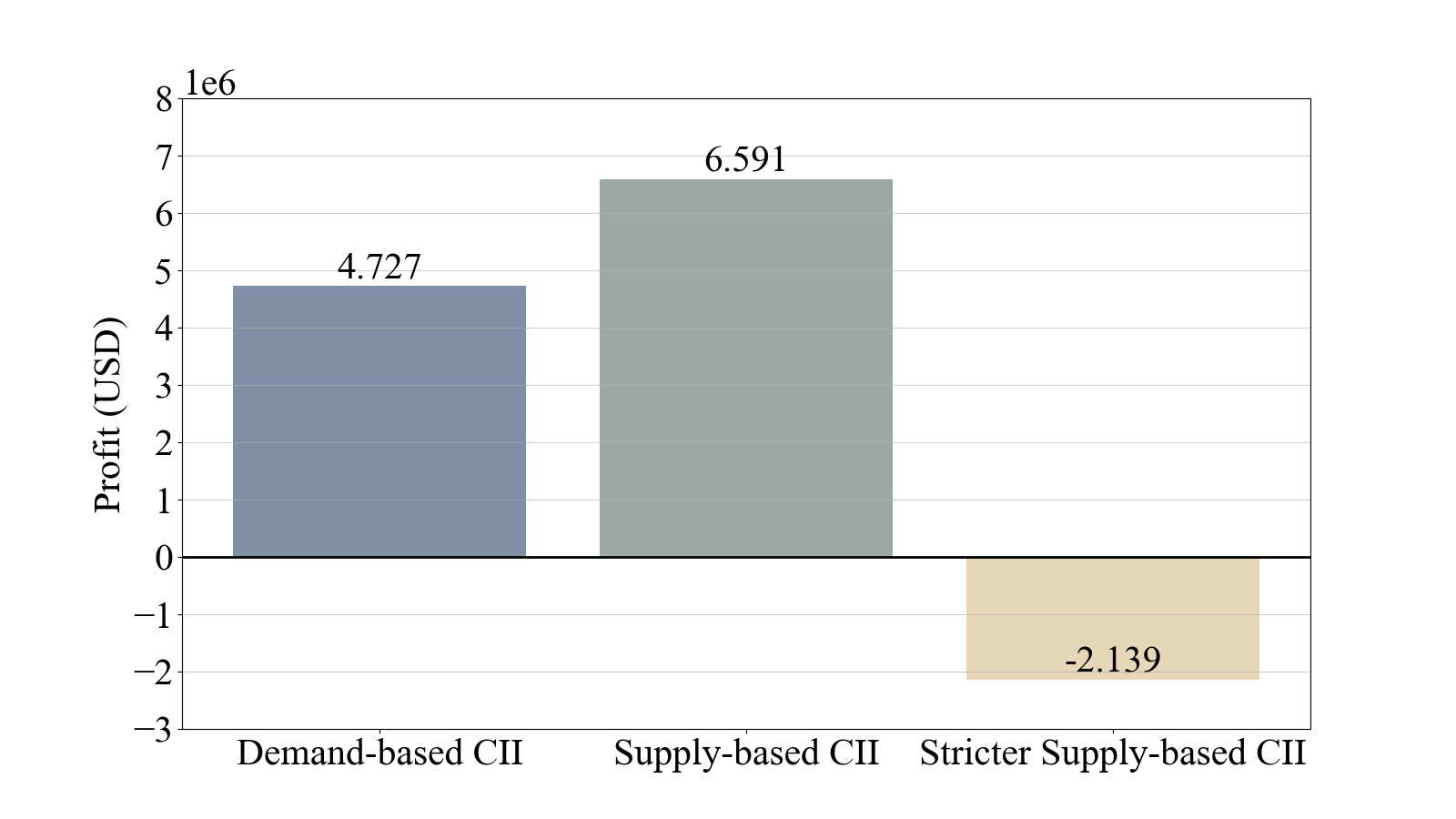}
    \caption{Total Profits under Different CII Forms}
    \caption*{\raggedright \setstretch{1}Note: ``Demand-based CII" refers to the Demand-based Carbon Intensity Indicator calculated with a D rating, 
    ``Supply-based CII" refers to the Supply-based Carbon Intensity Indicator with a D rating, 
    and ``Stricter Supply-based CII" corresponds to the Supply-based Carbon Intensity Indicator with an A rating.}
    \label{fig:comparison_of_profits_under_CII}
\end{figure}

Overall, we highlight the existence of the Supply-based CII paradox. To further illustrate how varying levels of CII stringency and ship emission intensities influence the severity of this paradox, we provide a detailed numerical experiment in Appendix~\ref{appendix:supply_cii_exp}. This appendix analyzes the evolution of total carbon emissions and profits under different Supply-based CII standards and emission levels, confirming the robustness of the paradox and offering additional managerial insights.

\subsection{Effect of stochastic scenarios}
\label{sec:ana_scenario}

In this section, Supply-based CII is selected as the environmental constraint to analyze the model. The following of this section compares the revenue gap using three formulations. 
\begin{itemize}
    \item Ignoring uncertainty: expected profits of using the solution with means (EMS). We assume that the random parameters in \textbf{P-2} all take their expected values.
    \item Considering uncertainty: expected profits of using the solution of the resource problem (RP). We solve the two-stage stochastic model taking into account all possible scenarios in \textbf{P-2}.
    \item Knowing uncertainty: expected profits of perfect information (Wait-and-See, WS). We calculate the maximum profit of all possible scenarios and then obtain their expected value.
\end{itemize}

\begin{figure}[!h]
    \centering
    \includegraphics[width=0.75\linewidth]{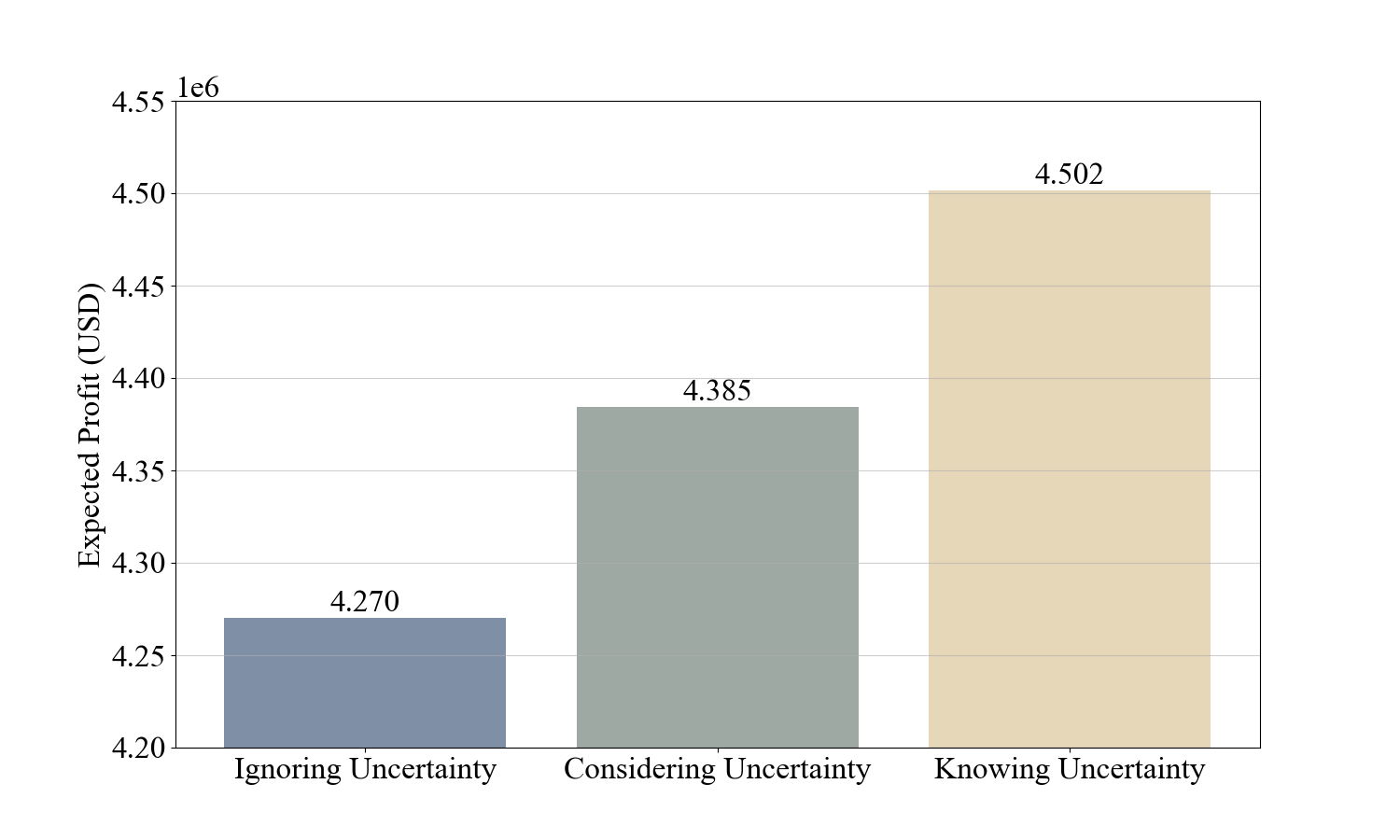}
    \caption{Comparison of Expected Profits}
    \label{fig:comparison_of_expected_profits}
\end{figure}

Figure \ref{fig:comparison_of_expected_profits} shows significant differences among the three formulations. The expected profits are $\$4.270 \times 10^6$ for \textit{Ignoring Uncertainty} (EMS), $\$4.385 \times 10^6$ for \textit{Considering Uncertainty} (RP), and $\$4.502 \times 10^6$ for \textit{Knowing Uncertainty} (WS). Two indicators are introduced to highlight the importance of uncertainty information:

\begin{itemize}
    \item \textbf{Expected Value of Perfect Information (EVPI)}: Measures the benefit of knowing the future with certainty. Defined as
    \begin{align}
        \text{EVPI} = \text{WS} - \text{RP}.
    \end{align}
    A positive EVPI indicates that knowing the future reduces the expected cost.

    \item \textbf{Value of the Stochastic Solution (VSS)}: Measures the benefit of incorporating the distribution of future outcomes. Defined as
    \begin{align}
        \text{VSS} = \text{RP} - \text{EMS}.
    \end{align}
    A positive VSS indicates that using the stochastic program improves upon the expected-value solution.
\end{itemize}

In this case, EVPI is $\$1.17 \times 10^5$, and VSS is $\$1.15 \times 10^5$. Both the EVPI and VSS are non-negative, consistent with Proposition~5.a in Chapter~4 of \cite{birge2011introduction}. We include the established theoretical proof of the non-negativity in Appendix~\ref{appendix:stochastic} for reference. A sensitivity analysis in Appendix~\ref{appendix:information_robust} further demonstrates that both EVPI and VSS are positive in our model. The positive VSS highlights the importance of considering future uncertainties in planning. The EVPI reflects the value of having perfect future knowledge relative to relying on probabilistic information. Thus, incorporating future uncertainties into planning reduces profit loss, even if it does not match the profit achievable with perfect information.

\subsection{Policy and managerial implications}
 
This section highlights policy and managerial implications associated with implementing the CII and incorporating stochastic market conditions into planning. 

\textbf{Implementation of CII}  
As discussed in Section \ref{sec:compare_cii}, inappropriate implementation of CII (with Supply-based CII examined under the settings of our model in this paper) can result in counterproductive environmental outcomes and reduced corporate profits, harming both the environment and market stability. Other forms of CII may also increase emissions in specific scenarios \citep{wang2021paradox}. To achieve Pareto improvements in market and environmental benefits, the IMO could refine the CII framework to better align with carbon reduction targets. The improved CII should discourage behaviors such as slowing down, detouring, or ballasting, which may reduce supply, disrupt the market, or even increase emissions. Instead, policies should guide ships toward genuine decarbonization through technological advancements such as alternative fuels and improved engine systems.

\textbf{Considering stochastic conditions}  
As shown in Section \ref{sec:ana_scenario}, incorporating future uncertainties into financial projections significantly enhances earnings. This pinpoints the importance of anticipating market fluctuations and integrating these dynamics into long-term tactical planning. Companies could leverage existing data and market intelligence to forecast changes and align their planning and decision-making accordingly. This approach enables enterprises to better adapt to volatility, seize opportunities, and optimize revenues. Conversely, ignoring market uncertainties can lead to substantial revenue losses and jeopardize a company's competitive position. Accounting for future uncertainties and developing agile strategies is vital for maintaining competitiveness and ensuring sustainable growth.

\section{Conclusion}
\label{sec:conclusion}
  
With the growing emphasis on decarbonization in the maritime industry, this study develop a two-stage stochastic programming model to optimize fleet deployment decisions for shipping companies under the CII policy in uncertain tramp shipping markets. Our study demonstrates the specific value of the tactical perspective: it reveals fleet-wide, year-long consequences of policy incentives (the ``CII paradox'') that would be invisible in a single-voyage analysis, and quantifies the economic benefit of incorporating long-term market uncertainty into planning. Unlike studies that focus primarily on market mechanisms such as the EU ETS or on technological development like the EEXI, our model evaluates the implications of the CII from the perspective of shipping companies. Our findings highlight critical issues in policy implementation and the value of incorporating market uncertainties into long-term decision-making to enhance profitability.

Numerical analysis under our model settings revealed potential paradoxes in which Supply-based CII could inadvertently increase carbon emissions and reduce corporate profits. This occurs when strategies such as ballasting or slowing down are used to meet lower CII standards, which reduce capacity, destabilize markets, and fail to achieve true emissions reduction. If the denominator of the CII metric does not accurately reflect actual operational behavior, failing to distinguish between laden, ballast, or idle states, distorted incentives may arise, harming both economic and environmental outcomes. Although our analysis focuses on tramp shipping, these insights are likely to generalize to other contexts where operational flexibility such as ballast sailing interacts with aggregated CII metrics, highlighting the need for more appropriate policies that balance environmental goals with economic realities.

Moreover, our study demonstrates the importance of incorporating future market trends into fleet deployment strategies. Proactive planning enables shipping companies to anticipate market shifts, optimize resource allocation, and maximize profits while mitigating risks from market volatility. While we used artificially assigned probability distributions and limited scenarios to describe future market conditions due to data limitations, the qualitative insight remains robust when generalized to large-scale real-world applications: shipping companies can always benefit from more accurate forecasts, although the exact magnitude of the benefit depends on each company's specific circumstances. This qualitative insight is inherently generalizable and does not diminish with problem scale, which is a constitutive advantage of the stochastic optimization approach.
 
This study has some limitations that suggest directions for future research. First, the model treats the CII as a strict constraint, whereas in real-world applications, moderate violations are often permitted with penalties. Future research could incorporate penalty costs for CII violations to better reflect practical implementation and guide the design of effective regulatory mechanisms. Second, while our model helps uncover paradoxes associated with the Supply-based CII, it could be extended to explore whether similar issues arise under alternative CII formulations, such as Demand-based CII.  Furthermore, the proposed method is limited to a heuristic route search algorithm. Future work could consider integrating this approach with other metaheuristics or domain-specific techniques to enhance solution quality and computational efficiency for solving the proposed complex model. Finally, while our study demonstrates the existence and positive value of incorporating future information within a realistic stochastic model, its precise magnitude in practice is contingent on the true distribution of market parameters. A promising direction for future research is to refine this quantification using actual industry data.

\clearpage
\bibliographystyle{apalike}
\bibliography{ref}

\newpage
\appendix

\section{Scenario design and validation}
\label{appendix:scenarios}

This appendix provides additional details on the scenario generation and validation process used in our study.

\subsection{Rationale behind determining the number of scenarios}

The scenario generation process involves certain subjective assumptions. However, these assumptions are guided by economic literature that explores plausible correlations between market demand, freight rates, and fuel prices. The 13 scenarios constructed in this study are designed to represent a comprehensive range of possible market conditions rather than to reproduce exact probabilistic distributions from historical data, which are often proprietary or unavailable.

A comparable approach can be found in \cite{wang2018planning}, where random parameters in the second stage are generated using multiplicative random factors. Specifically, for a given random parameter such as the freight rate ($RP$), the model in \cite{wang2018planning} defines
$$
RP = \xi_1 \cdot \mathbb{E}[RP],
$$
where $\mathbb{E}[RP]$ denotes the expected freight rate and $\xi_1$ represents an underlying random factor. Similarly, demand is generated as $D = \xi_2 \cdot \mathbb{E}[D]$, where $\xi_2$ is another random factor. To reflect the economic rationale that demand and price are correlated through market prosperity, a strong positive correlation is imposed between $\xi_1$ and $\xi_2$. In that study, 50 scenarios are generated to capture the randomness of 27 parameters (including 22 contract demands).

Our setting can be viewed as a simplification of the approach in \cite{wang2018planning}. In the second-stage model, parameters expected to remain stable are fixed at their base values in \textbf{P-2}, while those expected to increase or decrease are scaled to 120\% and 80\% of their base values, respectively. Following the structure in \cite{wang2018planning}, the random factor $\xi$ is defined with the discrete distribution: $\mathbb{P}(\xi = 1.2) = \tfrac{5}{13},\ \mathbb{P}(\xi = 1.0) = \tfrac{3}{13},\ \mathbb{P}(\xi = 0.8) = \tfrac{5}{13}$. These probabilities are not uniformly distributed because we intentionally include cases where fuel cost and market demand move in opposite directions, and where freight rate may increase, decrease, or remain unchanged. Each of the 13 scenarios is assigned equal probability ($1/13$) for simplicity, since historical data to reflect real scenario probabilities are not available. The correlations among fuel price, market demand, and freight rate are introduced deliberately according to insights from economic literature. Specifically, we model a positive correlation between fuel price and freight rate, reflecting the cost pass-through effect; a positive correlation between market demand and freight rate, driven by supply-demand dynamics; and a negative correlation between fuel price and market demand, accounting for potential cost-driven demand reduction. The resulting pairwise correlation coefficients among the three variables are summarized in Table~\ref{tab:A1}.

\begin{table}[!ht]
    \centering
    \caption{Correlation matrix among fuel price, market demand, and freight rate}
    \label{tab:A1}
    \begin{tabular}{lccc}
        \toprule
        & Fuel price & Market demand & Freight rate \\
        \midrule
        Fuel price    & 1.0 & -0.4 & 0.4 \\
        Market demand & -0.4 & 1.0 & 0.4 \\
        Freight rate  & 0.4 & 0.4 & 1.0 \\
        \bottomrule
    \end{tabular}
\end{table}

These correlations are consistent with market intuition: fuel price and freight rate are positively related due to cost pass-through; market demand and freight rate are positively related through demand–supply interaction; and fuel price and market demand exhibit a negative correlation because higher fuel costs may reduce demand through higher transport prices. Hence, the designed scenarios capture the main economic interactions observed in real markets.

\subsection{Validation with additional scenarios}

Given the limited number of contracts (two or three in our test cases), it is unnecessary to use a large number of scenarios (e.g., more than 50) as in \cite{wang2018planning}. To verify that 13 scenarios are sufficient, we performed additional experiments by generating 15 expanded datasets of 52 scenarios. The 52-scenario set consists of the original 13 scenarios plus 39 additional ones obtained by adding stochastic variations to each original scenario (three randomized variants per base scenario). The comparison results are shown in Table~\ref{tab:A2}.

\begin{table}[htbp]
\centering
\caption{Comparison of objective values, cost, and revenue under two scenario sets}
\label{tab:A2}
\resizebox{\textwidth}{!}{
\begin{tabular}{ccccccc}
\toprule
\multirow{2}{*}{\textbf{Instance}} 
& \multicolumn{2}{c}{\textbf{Objective Value (\$)}} 
& \multicolumn{2}{c}{\textbf{Cost (\$)}} 
& \multicolumn{2}{c}{\textbf{Revenue (\$)}} \\
\cmidrule(lr){2-3} \cmidrule(lr){4-5} \cmidrule(lr){6-7}
& \textbf{13 Scenarios} & \textbf{52 Scenarios} 
& \textbf{13 Scenarios} & \textbf{52 Scenarios} 
& \textbf{13 Scenarios} & \textbf{52 Scenarios} \\
\midrule
1  & $8.56{\times}10^6$  & $8.13{\times}10^6$  & $1.87{\times}10^7$ & $1.81{\times}10^7$ & $1.01{\times}10^7$ & $1.00{\times}10^7$ \\
2  & $9.59{\times}10^6$  & $9.16{\times}10^6$  & $2.08{\times}10^7$ & $2.02{\times}10^7$ & $1.12{\times}10^7$ & $1.10{\times}10^7$ \\
3  & $3.62{\times}10^6$  & $3.35{\times}10^6$  & $1.31{\times}10^7$ & $1.27{\times}10^7$ & $9.47{\times}10^6$ & $9.38{\times}10^6$ \\
4  & $8.55{\times}10^6$  & $8.11{\times}10^6$  & $1.45{\times}10^7$ & $1.41{\times}10^7$ & $5.98{\times}10^6$ & $6.00{\times}10^6$ \\
5  & $9.22{\times}10^6$  & $8.82{\times}10^6$  & $1.59{\times}10^7$ & $1.55{\times}10^7$ & $6.67{\times}10^6$ & $6.66{\times}10^6$ \\
6  & $-8.40{\times}10^5$ & $-1.24{\times}10^6$ & $1.35{\times}10^7$ & $1.31{\times}10^7$ & $1.43{\times}10^7$ & $1.44{\times}10^7$ \\
7  & $1.70{\times}10^6$  & $1.28{\times}10^6$  & $1.67{\times}10^7$ & $1.63{\times}10^7$ & $1.50{\times}10^7$ & $1.50{\times}10^7$ \\
8  & $-1.90{\times}10^6$ & $-2.38{\times}10^6$ & $1.57{\times}10^7$ & $1.53{\times}10^7$ & $1.76{\times}10^7$ & $1.76{\times}10^7$ \\
9  & $4.03{\times}10^5$  & $4.34{\times}10^4$  & $1.69{\times}10^7$ & $1.65{\times}10^7$ & $1.65{\times}10^7$ & $1.64{\times}10^7$ \\
10 & $6.98{\times}10^6$  & $6.60{\times}10^6$  & $1.50{\times}10^7$ & $1.46{\times}10^7$ & $8.03{\times}10^6$ & $7.97{\times}10^6$ \\
11 & $8.26{\times}10^4$  & $3.31{\times}10^4$  & $1.69{\times}10^7$ & $1.69{\times}10^7$ & $1.68{\times}10^7$ & $1.69{\times}10^7$ \\
12 & $4.11{\times}10^6$  & $3.65{\times}10^6$  & $1.87{\times}10^7$ & $1.82{\times}10^7$ & $1.46{\times}10^7$ & $1.46{\times}10^7$ \\
13 & $-5.22{\times}10^6$ & $-5.90{\times}10^6$ & $1.67{\times}10^7$ & $1.61{\times}10^7$ & $2.20{\times}10^7$ & $2.20{\times}10^7$ \\
14 & $-6.27{\times}10^6$ & $-7.14{\times}10^6$ & $1.90{\times}10^7$ & $1.81{\times}10^7$ & $2.53{\times}10^7$ & $2.52{\times}10^7$ \\
\bottomrule
\end{tabular}}
\end{table}

The results show that the differences between the two models are marginal. On average, the model with 13 scenarios yields an objective value of $2.76\times10^6$, while that with 52 scenarios yields $2.32\times10^6$. The average cost and revenue are $1.66\times10^7$ and $1.38\times10^7$, respectively, for the 13-scenario model, and $1.61\times10^7$ and $1.38\times10^7$ for the 52-scenario model. The additional 39 scenarios thus have limited influence on the optimal results while substantially increasing computational complexity. Hence, we retain the 13-scenario setting in the main analysis.

\section{Detailed analysis of the Supply-based CII paradox}
\label{appendix:supply_cii}

\subsection{Formal proof of the Supply-based CII paradox}
\label{appendix:supply_cii_theory}

This appendix provides a formal proof that, under mild conditions, tightening the Supply-based CII standard can paradoxically lead to an increase in total annual emissions for a single vessel. We work with the Supply-based CII definition: $$\text{CII}=\frac{M}{C\cdot D^T},$$ 
where $\text{CII}$ denotes the Supply-based CII, \(M\) is annual CO2 emissions (grams), \(C\) is the capacity measure (tonnes), and \(D^T\) is the total annual distance (nautical miles). We begin by stating the necessary assumptions and the main proposition.

\paragraph{Assumptions}
\begin{enumerate}[label=(A\arabic*)]
    \item \textbf{Emission Rates:} The carbon emission rate per nautical mile is higher when the ship is laden than when it is in ballast: $\epsilon_{\text{laden}} > \epsilon_{\text{ballast}} > 0$. The emission rate per unit time at port is $\epsilon_{\text{port}} \ge 0$.
    \item \textbf{Port vs. Ballast Emission Intensity:} The emission intensity of ballast sailing is greater than that of port stay when measured per unit of CII-accounted activity: $\epsilon_{\text{ballast}} > \epsilon_{\text{port}} / e$, where $e$ is the sailing speed.
    \item \textbf{Fixed Laden Task:} The total laden distance $D^T_{\text{laden}}$ and its associated emissions $M_{\text{laden}}$ are fixed. The ship is initially compliant with the baseline CII standard, meaning that it operates at the lowest sailing speed during both laden and ballast voyages.
    \item \textbf{Feasible Standard:} The CII standard is feasible, i.e., it is not stricter than the lowest achievable intensity of continuous ballast sailing: $C\cdot\mathrm{CII}_{\text{target}} \ge \epsilon_{\text{ballast}}$, where $C$ is the capacity of the ship (tonnage).
\end{enumerate}

\begin{proposition}
Under Assumptions (A1)-(A4), if a ship is initially compliant with a Supply-based CII standard $\mathrm{CII}_0$ and the standard is tightened to $(1-p)\mathrm{CII}_0$ for some $p \in (0,1)$, then the ship can achieve compliance with the new standard by substituting port time for additional ballast sailing, but this strategy will result in an increase in its total annual CO2 emissions.
\end{proposition}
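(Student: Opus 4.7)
The plan is to reduce the proposition to a one-parameter perturbation argument indexed by the amount $\Delta t \ge 0$ of port time that is reallocated to additional ballast sailing at the minimum ballast speed $e$. First I would decompose the baseline state as $M_0 = M_{\text{laden}} + \epsilon_{\text{ballast}} D^T_{\text{ballast},0} + \epsilon_{\text{port}} T_{\text{port},0}$ and $D^T_0 = D^T_{\text{laden}} + D^T_{\text{ballast},0}$, with $\text{CII}_0 = M_0/(C\,D^T_0)$. The substitution replaces $\Delta t$ units of port time (contributing $\epsilon_{\text{port}}\Delta t$ to emissions and nothing to distance) with $\Delta t$ units of ballast sailing (contributing $\epsilon_{\text{ballast}} e\,\Delta t$ to emissions and $e\,\Delta t$ to distance). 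By (A3) the laden components stay fixed, so the perturbed annual emissions and distance are
\begin{align*}
M(\Delta t) &= M_0 + (\epsilon_{\text{ballast}} e - \epsilon_{\text{port}})\,\Delta t,\\
D^T(\Delta t) &= D^T_0 + e\,\Delta t,
\end{align*}
yielding the perturbed Supply-based CII $f(\Delta t) = M(\Delta t)/\bigl(C\,D^T(\Delta t)\bigr)$, with $f(0) = \text{CII}_0$.

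The emissions-increase conclusion is then immediate from Assumption (A2): since $\epsilon_{\text{ballast}} > \epsilon_{\text{port}}/e$, the coefficient $\epsilon_{\text{ballast}} e - \epsilon_{\text{port}}$ is strictly positive, so $M(\cdot)$ is strictly increasing in $\Delta t$. Hence any positive substitution raises total annual emissions strictly above $M_0$, which yields the paradox provided some positive $\Delta t$ also secures compliance with the tightened standard.

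For the compliance conclusion, I would differentiate the M\"obius-type ratio $f$; a short calculation shows that its derivative has a constant sign on $[0,\infty)$ determined by whether the ``marginal CII rate'' $\mu := (\epsilon_{\text{ballast}} e - \epsilon_{\text{port}})/(Ce) = \epsilon_{\text{ballast}}/C - \epsilon_{\text{port}}/(Ce)$ lies strictly below $f(0) = \text{CII}_0$. Applying (A4) to the tightened target gives $\epsilon_{\text{ballast}}/C \le (1-p)\text{CII}_0$, and combining with $\epsilon_{\text{port}} \ge 0$ yields $\mu < (1-p)\text{CII}_0 < \text{CII}_0 = f(0)$, so $f$ is strictly decreasing on $[0,\infty)$ and converges to $\mu$ as $\Delta t \to \infty$. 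By continuity and the intermediate value theorem, there exists a unique $\Delta t^\star > 0$ with $f(\Delta t^\star) = (1-p)\text{CII}_0$, at which the tightened constraint is exactly met while $M(\Delta t^\star) > M_0$. The main obstacle I anticipate is ensuring that $\Delta t^\star$ actually fits within the available port budget $T_{\text{port},0}$: one must either impose a mild slackness assumption on $T_{\text{port},0}$, or observe that $\Delta t^\star$ is continuous in $p$ with $\Delta t^\star \to 0$ as $p \to 0$, so that for sufficiently small tightening $p$ the required substitution is always feasible within the ship's idle time.
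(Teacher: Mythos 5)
Your proposal is correct and follows essentially the same route as the paper's proof: the same decomposition of $M_0$ and $D^T_0$, the same linear perturbation trading port time for ballast distance (yours indexed by $\Delta t$, the paper's by $\Delta = e\,\Delta t$), Assumption (A2) giving the strict emissions increase, and Assumption (A4) guaranteeing a positive compliant perturbation --- your monotonicity-plus-IVT argument for the M\"obius ratio $f$ is just the qualitative version of the paper's explicit formula $\Delta = pM_0\big/\bigl[(1-p)M_0/D_0^T - (\epsilon_{\text{ballast}} - \epsilon_{\text{port}}/e)\bigr]$, whose positive denominator is exactly your condition $\mu < (1-p)\mathrm{CII}_0$. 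Your closing concern about whether $\Delta t^\star$ fits within the available port time $T_{\text{port},0}$ is a legitimate point that the paper's proof leaves implicit, but it does not affect the correctness of either argument under the stated assumptions.
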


\begin{proof}
Let the baseline (pre-tightening) annual emissions and distance be:
\[
M_0 = M_{\text{laden}} + M_{\text{ballast},0} + M_{\text{port},0}, \quad D_0 = D^T_{\text{laden}} + D^T_{\text{ballast},0} + D^T_{\text{port},0},
\]
with $\mathrm{CII}_0 = M_0 / (C \cdot D_0)$. 

Suppose the ship increases its annual ballast distance by $\Delta > 0$. This requires $\Delta / e$ additional hours of ballast sailing, which we assume is offset by an equal reduction in port time. By Assumption (A1) and (A3), the new emissions and distance are:
\[
M_{\text{new}} = M_0 + \epsilon_{\text{ballast}} \Delta - (\epsilon_{\text{port}} / e) \Delta, \quad D^T_{\text{new}} = D_0 + \Delta.
\]

The ship complies with the new standard if:
\[
\frac{M_{\text{new}}}{C \cdot D^T_{\text{new}}} = (1-p)\mathrm{CII}_0=(1-p)\frac{M_0}{C\cdot D_0^T}.
\]
Substituting the expressions and solving for $\Delta$ yields:
\[
\Delta = \frac{p M_0}{(1-p)\frac{M_0}{D_0^T} - \left(\epsilon_{\text{ballast}} - \frac{\epsilon_{\text{port}}}{e}\right)}.
\]

By Assumption (A4), $(1-p)\frac{M_0}{D^T_0} \ge \epsilon_{\text{ballast}} > \epsilon_{\text{ballast}} - \frac{\epsilon_{\text{port}}}{e}$. Therefore, the denominator is positive, guaranteeing $\Delta > 0$.

The change in total emissions is:
\[
M_{\text{new}} - M_0 = \left(\epsilon_{\text{ballast}} - \frac{\epsilon_{\text{port}}}{e}\right) \Delta.
\]
Assumption (A2) ensures that this quantity is positive. Therefore, total emissions increase.
\end{proof}

This proof formalizes the perverse incentive inherent in the Supply-based CII. By rewarding total distance rather than productive work, the regulation makes ballast sailing a viable compliance strategy. Since ballast sailing has a higher operational carbon intensity than staying at port ($\epsilon_{\text{ballast}} > \epsilon_{\text{port}}/e$), this substitution increases overall emissions.

\subsection{Evolution of Carbon Emissions and Profits under Different Supply-based CII Standards}
\label{appendix:supply_cii_exp}

To validate the theoretical insights presented above, we perform a sensitivity analysis on different Supply-based CII standards and ship carbon emission levels to examine the robustness of the Supply-based CII paradox.

The CII standard is varied from Rate D (8.6 g/(ton$\cdot$nmile)) to Rate A (11.8 g/(ton$\cdot$nmile)). Ship carbon emission levels are adjusted to 100\%, 90\%, 80\%, and 70\% to simulate the adoption of low-carbon technologies, reflecting reduced emissions per nautical mile at the same speed and cargo load.

Our results indicate two key patterns. First, the Supply-based CII paradox—where stricter standards can lead to higher total emissions—becomes more pronounced as the CII standard tightens. Second, the paradox is mitigated when ships adopt low-carbon technologies. As illustrated in Figure~\ref{fig:CII_robust}, both a more lenient CII standard and lower emission levels reduce the paradox. Specifically, for a given emission level, the increase in total emissions caused by stricter standards diminishes. Likewise, for a given standard, lower ship emissions lead to smaller increases in total emissions.

A similar trend is observed for profits: stricter CII standards reduce profitability, whereas lower ship emission levels improve it. This occurs because high-emission ships are incentivized to increase ballast sailing to comply with stricter CII standards, raising total emissions and lowering profits. In contrast, low-emission ships or ships operating under more lenient standards have less need for ballast sailing, resulting in lower total emissions and higher profits.

\begin{figure}[!h]
    \begin{subfigure}[b]{0.5\textwidth}
        \centering
        \includegraphics[width=\linewidth]{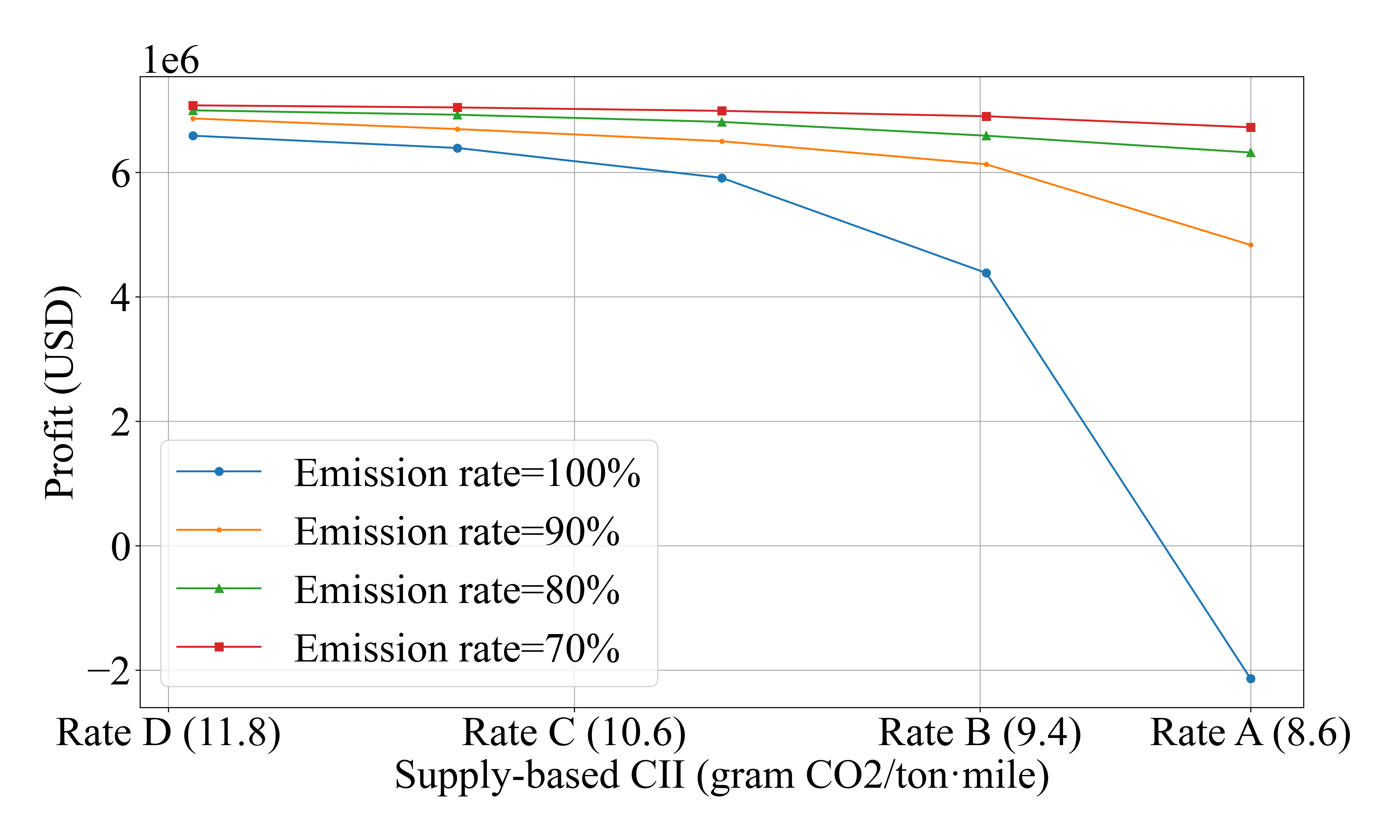}
        \caption{Profits}
        \label{fig:CII_robust_profit}
    \end{subfigure}%
    \hfill
    \begin{subfigure}[b]{0.5\textwidth}
        \centering
        \includegraphics[width=\linewidth]{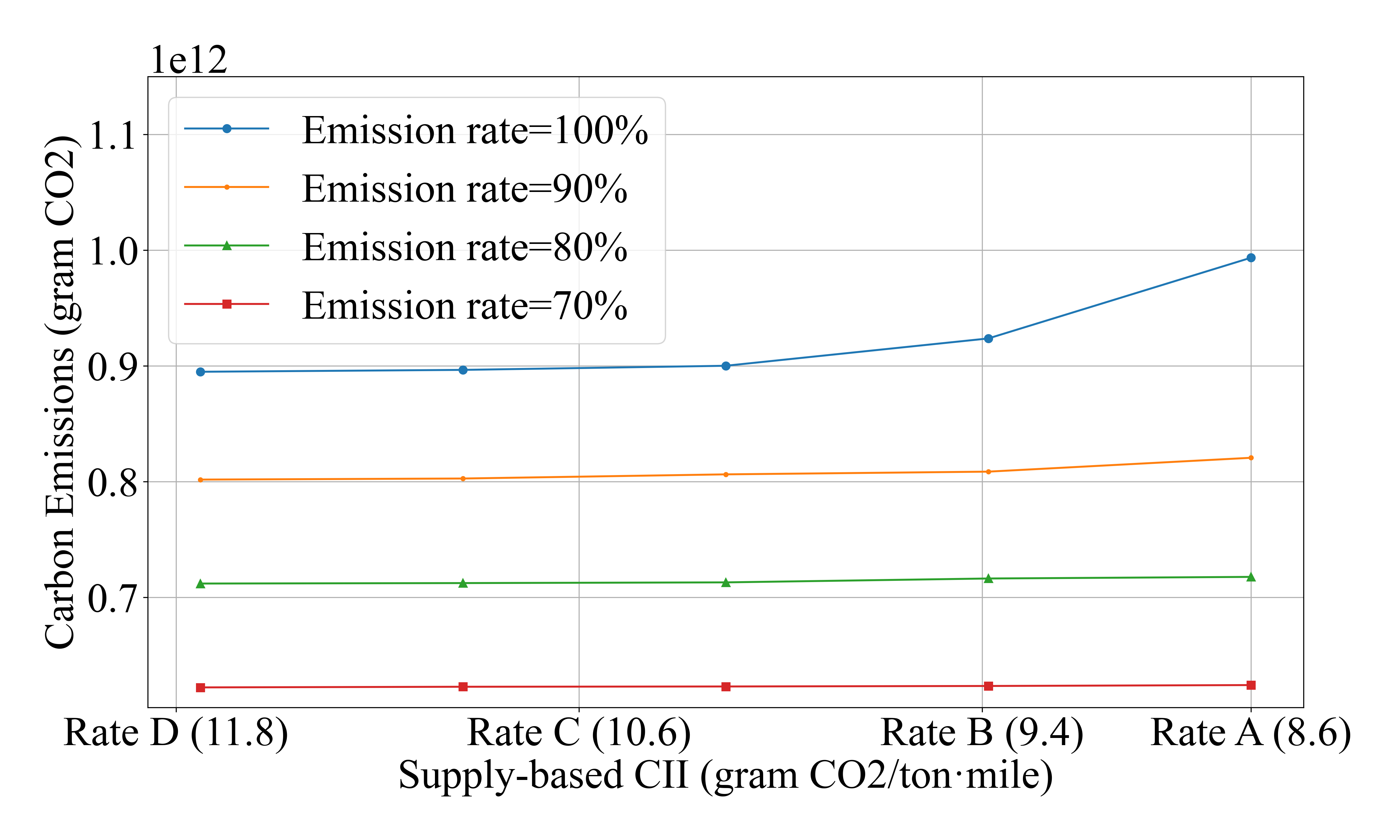}
        \caption{Carbon emissions}
        \label{fig:CII_robust_carbon}
    \end{subfigure}
    \caption{Changes in (a) profits and (b) carbon emissions under different CII standards and emission levels.}
    \label{fig:CII_robust}
\end{figure}

\section{The value of information and the stochastic solution}

\subsection{Proof of the Non-Negativity of the Value of Future Information}
\label{appendix:stochastic}

As a basic property of stochastic programming, the value of information, namely the Expected Value of Perfect Information (EVPI) and the Value of the Stochastic Solution (VSS), is non-negative, as established by \cite{birge2011introduction}. Therefore, for the reader's convenience, we reproduce the detailed proof here to provide a theoretical foundation for the numerical analysis in Section~\ref{sec:ana_scenario}.

\begin{proposition}[Non-negativity of EVPI and VSS]
Consider a two-stage stochastic programming problem with a maximization objective, and with first-stage decision $x \in X$ and random scenario $\xi$. Let $v(x,\xi)$ denote the optimal payoff given $x$ and $\xi$. Define
\begin{align*}
    z^{\text{RP}} &:= \max_{x \in X} \mathbb{E}_\xi[v(x,\xi)], \\
    z^{\text{WS}} &:= \mathbb{E}_\xi\left[ \max_{x \in X} v(x,\xi) \right], \\
    z^{\text{EMS}} &:= \mathbb{E}_\xi[v(x^{\text{EV}},\xi)], \quad x^{\text{EV}} \in \arg\max_{x \in X} v(x,\mathbb{E}[\xi]).
\end{align*}
Then the Expected Value of Perfect Information (EVPI) and the Value of the Stochastic Solution (VSS) satisfy
$$\text{EVPI} := z^{\text{WS}} - z^{\text{RP}} \ge 0, \qquad
\text{VSS} := z^{\text{RP}} - z^{\text{EMS}} \ge 0.$$

\end{proposition}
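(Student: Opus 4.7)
The plan is to prove the two inequalities separately: EVPI $\geq 0$ follows from the standard max–expectation interchange inequality, and VSS $\geq 0$ follows from a direct suboptimality argument, noting that $x^{\text{EV}}$ is a feasible first-stage decision for the recourse problem.

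For EVPI, I would begin from the pointwise inequality that, for any fixed $\tilde x \in X$ and every realization of $\xi$,
$$v(\tilde x, \xi) \le \max_{x \in X} v(x,\xi).$$
Taking expectation over $\xi$ preserves the inequality, giving $\mathbb{E}_\xi[v(\tilde x,\xi)] \le z^{\text{WS}}$. Since this bound holds uniformly in $\tilde x$, I would then take the supremum of the left-hand side over $\tilde x \in X$ to obtain $z^{\text{RP}} \le z^{\text{WS}}$, which is exactly EVPI $\geq 0$. This is simply the classical inequality $\sup_x \mathbb{E}[f(x,\xi)] \le \mathbb{E}[\sup_x f(x,\xi)]$ in disguise, and it only requires that the maxima are attained (or, if not, that one works with suprema); the probability measure and the feasible set $X$ enter only through expectation and monotonicity, so no regularity assumption beyond measurability of $v$ is needed.

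For VSS, I would exploit the fact that $x^{\text{EV}} \in \arg\max_{x \in X} v(x,\mathbb{E}[\xi])$ is, in particular, an element of $X$, and therefore a feasible candidate for the recourse problem. By the definition of $z^{\text{RP}}$ as the maximum over $X$ of the expected payoff, plugging in this specific candidate yields
$$z^{\text{RP}} \;=\; \max_{x \in X} \mathbb{E}_\xi[v(x,\xi)] \;\ge\; \mathbb{E}_\xi\bigl[v(x^{\text{EV}},\xi)\bigr] \;=\; z^{\text{EMS}},$$
which is the desired inequality VSS $\geq 0$.

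The main obstacle is essentially nonexistent, as both statements are textbook properties; the only subtlety worth flagging in the write-up is the implicit assumption that $x^{\text{EV}}$ remains feasible under every realization $\xi$, i.e., that the first-stage feasible set $X$ does not depend on $\xi$. If second-stage feasibility were scenario-dependent, one would adopt the standard convention $v(x,\xi) = -\infty$ whenever no feasible recourse exists, and both inequalities still go through with the same arguments. I would state this convention explicitly at the start of the proof to avoid ambiguity, and then present the two inequalities in sequence as above.
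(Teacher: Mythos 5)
Your proposal is correct and follows essentially the same argument as the paper: the pointwise bound $v(\tilde x,\xi)\le\max_{x\in X}v(x,\xi)$ integrated over $\xi$ gives EVPI $\ge 0$ (the paper instantiates $\tilde x = x^{\text{RP}}$ rather than taking a supremum afterward, which is equivalent), and VSS $\ge 0$ follows identically by recognizing $x^{\text{EV}}$ as a feasible candidate in the definition of $z^{\text{RP}}$. Your added remarks on attainment and scenario-independent feasibility are sound but not needed beyond what the paper assumes.
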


\begin{proof}
\noindent \textbf{(1) EVPI is non-negative}\\
For any scenario $\xi$, the following inequality holds by definition of the maximum:
$$\max_{x \in X} v(x,\xi) \ge v(x^\star,\xi), \quad \forall x^\star \in X.$$
Let $x^\star = x^{\text{RP}}$, the optimal solution of the Resource Problem (RP). Taking expectation over $\xi$ gives
$$\mathbb{E}_\xi \left[ \max_{x \in X} v(x,\xi) \right] \ge \mathbb{E}_\xi [v(x^{\text{RP}},\xi)] = z^{\text{RP}}.$$
By definition, the left-hand side is $z^{\text{WS}}$. Therefore,
$$z^{\text{WS}} - z^{\text{RP}} \ge 0 \quad \Longrightarrow \quad \text{EVPI} \ge 0.$$

\noindent \textbf{(2) VSS is non-negative}\\
By definition of $z^{\text{RP}}$ as the maximum expected payoff,
$$z^{\text{RP}} = \max_{x \in X} \mathbb{E}_\xi[v(x,\xi)] \ge \mathbb{E}_\xi[v(x^{\text{EV}},\xi)] = z^{\text{EMS}}.$$
Thus,
$$z^{\text{RP}} - z^{\text{EMS}} \ge 0 \quad \Longrightarrow \quad \text{VSS} \ge 0.$$
\end{proof}

So far, we have shown the non-negativity of EVPI and VSS. We now analyze the equality conditions.
\begin{itemize}
    \item $\text{EVPI} = 0$ if and only if the RP solution $x^{\text{RP}}$ is optimal for every scenario $\xi$, i.e.,
    $$\max_{x \in X} v(x,\xi) = v(x^{\text{RP}},\xi) \quad \text{almost surely}.$$
    This corresponds to the case where knowing the exact scenario provides no additional benefit.
    \item $\text{VSS} = 0$ if and only if the solution $x^{\text{EV}}$ obtained from the expected scenario is already optimal in expectation over all scenarios, i.e.,
    $$x^{\text{EV}} \in \arg\max_{x \in X} \mathbb{E}_\xi[v(x,\xi)].$$
    In practice, $\text{VSS} > 0$ whenever the stochasticity of $\xi$ affects the optimal first-stage decision.
\end{itemize}

This result aligns with Proposition~5.a in Chapter 4 of \cite{birge2011introduction}, which considers a minimization objective function. Consequently, our numerical findings, which quantify the value of information in the tramp shipping market based on the given dataset, are theoretically justified and robust.

\subsection{Non-negativity of EVPI and VSS under different scenario settings}
\label{appendix:information_robust}

To validate the theoretical results presented above and the qualitative findings in the main text, we conduct a sensitivity analysis under various scenario settings. Specifically, based on the 13 baseline scenarios used in the numerical experiments, we generate additional sets of 26, 39, and 52 scenarios by introducing random perturbations to the original scenario parameters, such as demand, fuel price, and revenue. The perturbations follow a uniform distribution within $\pm15\%$ of the baseline values.

The results, summarized in Figure~\ref{fig:information_robust}, show that under different numbers and distributions of scenarios, both EVPI and VSS remain strictly positive, although their magnitudes vary slightly. This consistency verifies that the theoretical property of non-negativity is robust to scenario design. From a managerial perspective, this implies that access to better information or stochastic optimization always improves, or at least never worsens, the expected profit of shipping companies.

\begin{figure}[!h]
    \centering
    \includegraphics[width=0.75\linewidth]{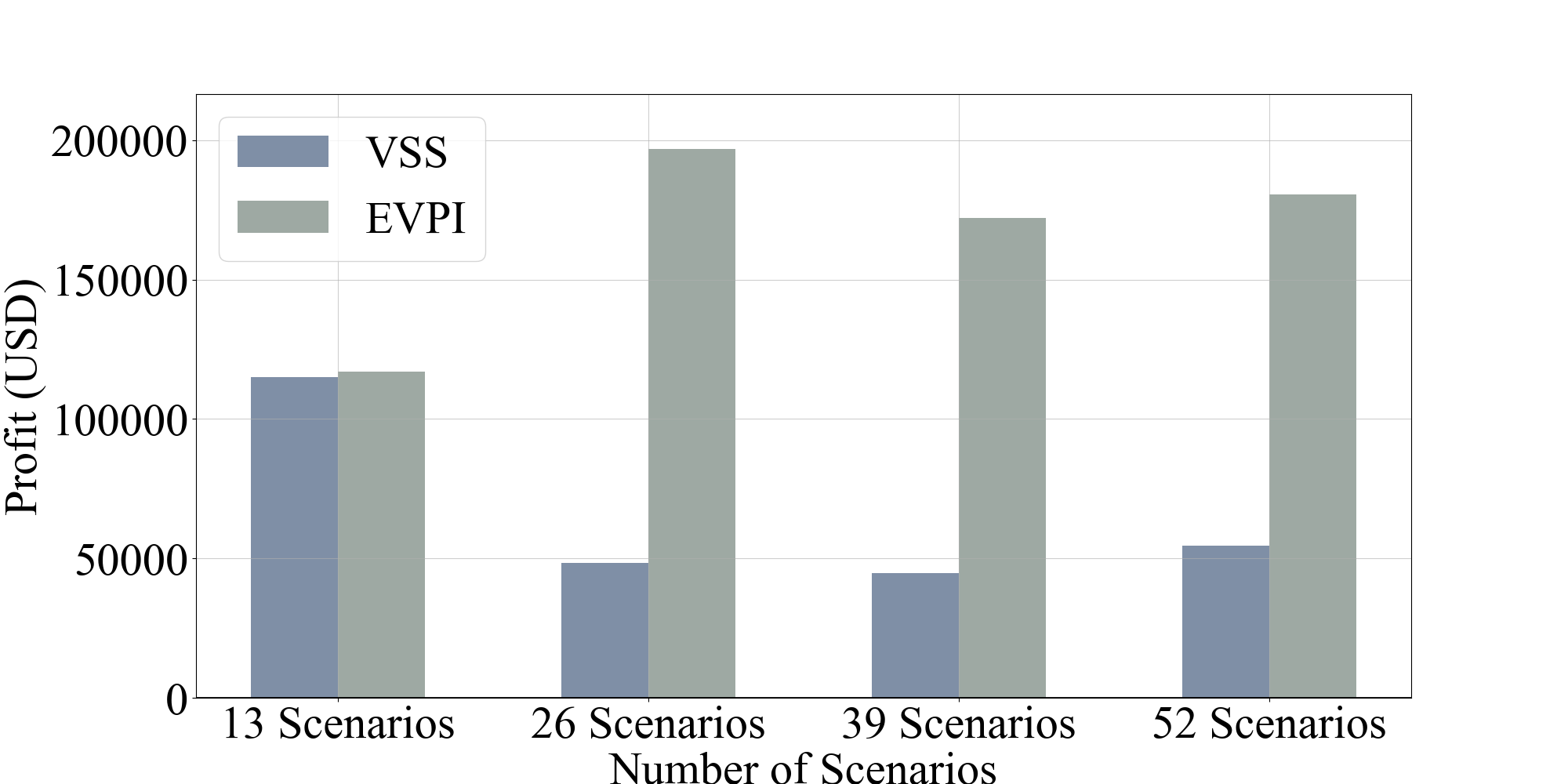}
    \caption{The value of future information: EVPI and VSS}
    \label{fig:information_robust}
    \caption*{\raggedright \setstretch{1}
    Note: ``EVPI'' refers to the Expected Value of Perfect Information; 
    ``VSS'' refers to the Value of the Stochastic Solution.}
\end{figure}

\end{document}